\title[]{Special Lagrangian cycles and Calabi-Yau transitions}
\author[T. C. Collins]{Tristan C. Collins}
  \email{tristanc@mit.edu}
  \address{Department of Mathematics, Massachusetts Institute of Technology, 77 Massachusetts Avenue, Cambridge, MA 02139}
 \thanks{T.C.C is supported in part by NSF CAREER grant DMS-194452 and an Alfred P. Sloan Fellowship. }
\author[S. Gukov]{Sergei Gukov}
  \email{gukov@theory.caltech.edu}
  \address{Walter Burke Institute for Theoretical Physics, California Institute of Technology, Pasadena, CA, 91125}
 \thanks{S.G.~is supported by the U.S.~Department of Energy, Office of Science, Office of High Energy Physics, under Award No.~DE-SC0011632, and by the National Science Foundation under Grant No.~NSF DMS 1664227. }
 \author[S. Picard]{Sebastien Picard}
  \email{spicard@math.ubc.ca}
  \address{Department of Mathematics, UBC, 1984 Mathematics Road,
    Vancouver, BC, Canada}
  \thanks{}
 \author[S.-T. Yau]{Shing-Tung Yau}
  \email{yau@math.harvard.edu}
  \address{Department of Mathematics, Harvard University, 1 Oxford St.,Cambridge, MA, 02138}
  \thanks{} 
\theoremstyle{plain}
\newtheorem{thm}{Theorem}[section]
\newtheorem{lem}[thm]{Lemma}
\theoremstyle{definition}
\newtheorem{ex}[thm]{Example}
\newtheorem{rk}[thm]{Remark}
\numberwithin{equation}{section}
\newcommand{\del}{\partial}
\newcommand{\dbar}{\overline{\del}}
\newcommand{\ddb}{\sqrt{-1}\del\dbar}
\newcommand{\F}{\mathcal{F}}
\newcommand{\LL}{\mathcal{L}}
\newcommand{\Q}{\mathcal{Q}}
\newcommand{\be}{\begin{equation}}
\newcommand{\bea}{\begin{eqnarray}}
\newcommand{\eea}{\end{eqnarray}} 
\newcommand{\ee}{\end{equation}}
\renewcommand{\leq}{\leqslant}
\renewcommand{\geq}{\geqslant}
\renewcommand{\epsilon}{\varepsilon}
\begin{document}
\maketitle

\begin{abstract}
We construct special Lagrangian 3-spheres in non-K\"ahler compact threefolds equipped with the Fu-Li-Yau geometry. These non-K\"ahler geometries emerge from topological transitions of compact Calabi-Yau threefolds. From this point of view, a conifold transition exchanges holomorphic 2-cycles for special Lagrangian 3-cycles.
\end{abstract}

\section{Introduction}
A broad goal in complex geometry and theoretical physics is to understand the parameter space of all simply connected Calabi-Yau threefolds. The possible Hodge diamonds of these manifolds are parametrized by two integers $h^{1,1}$ and $h^{2,1}$. A program of Candelas-Green-Hubsch, Clemens, Friedman and Reid \cite{Clemens,CGH,Friedman,Reid} proposes a procedure for traveling between two topologically distinct Calabi-Yau threefolds by birational contraction followed by smoothing. The simplest instance of such a topological transition is a conifold transition $Y \rightarrow \underline{Y} \rightsquigarrow X$. A conifold transition contracts 2-spheres in the initial threefold $Y$, which decreases $h^{1,1}$, and introduces new 3-spheres in the resulting threefold $X$, which increases $h^{2,1}$. It is conjectured \cite{CGH,Reid} that all simply connected Calabi-Yau threefolds can be joined by birational contraction followed by smoothing (see \cite{Rossi} for a survey of these ideas). There is also a symplectic mirror to this operation \cite{STY}.

\par As noted in \cite{Friedman,Reid}, collapsing sufficiently many 2-spheres during a conifold transition $Y \rightarrow \underline{Y} \rightsquigarrow X$ produces a complex manifold with second Betti number $b_2(X)=0$. Thus a conifold transition may connect a K\"ahler Calabi-Yau threefold to a non-K\"ahler complex manifold with trivial canonical bundle. Such examples starting from a quintic in $\mathbb{P}^4$ are given in \cite{Friedman} (see also \cite{LuTian} for more examples of going from K\"ahler to non-K\"ahler). 

\par This suggests that certain non-K\"ahler manifolds should be included in the parameter space of Calabi-Yau threefolds. The study of non-K\"ahler Calabi-Yau geometry in theoretical physics was initiated by Strominger \cite{Strominger}, and has since grown into an active area of research; see e.g. \cite{AGF,Fei,FHP,FeiYau,FIUV,FGV,FLY,FY,HIS,OUV} for examples, see \cite{AGS,BBDG,BBFTY,CdlOMc,dlOSvanes,GMW,Grana,Hull} for developments in string theory, and see \cite{FPPZA,FPPZB,GJS,GRT,GRST,LTY,LiuKFYang,Phong,PPZ18a,PPZ18b,PPZ18c,ST,STW,Tosatti,TsengYau} for research programs in this area.

\par We would like to understand how geometric objects evolve as they move across a topological transition of Calabi-Yau threefolds. We refer to \cite{CandelasdlOssa,GMS,AplusB,Stromingerholes} for related ideas in K\"ahler string theory. In our setup, our initial object is a K\"ahler Calabi-Yau threefold $Y$ equipped with a K\"ahler Ricci-flat metric \cite{Yau78}. A topological transition will take us to another, possibly non-K\"ahler, complex manifold $X$. We list below some geometric properties which are known to be preserved as we travel from $Y$ to $X$.

\par  Fu-Li-Yau \cite{FLY} showed that $X$ admits a balanced metric $\omega$, meaning that it satisfies $d \omega^2=0$. This condition appears in string theory as a condition for supersymmetry \cite{LY,Strominger}. Friedman \cite{FriedmanDDB} showed that $X$ satisfies the $\partial \bar{\partial}$-lemma. It was shown in \cite{CPY} that the stability of the tangent bundle is preserved by the transition, in the sense that the tangent bundle of $X$ admits a Hermitian-Yang-Mills connection with respect to the balanced metric $\omega$. The stability of other bundles satisfying a local triviality condition near the singular points was studied in \cite{Chuan}.

\par In this work, we will consider special submanifolds of Calabi-Yau threefolds. On one side of the transition, we consider holomorphic 2-cycles. On the other side, we consider special Lagrangian 3-cycles. Let $Y \rightarrow \underline{Y} \rightsquigarrow X_t$ be a conifold transition contracting holomorphic 2-spheres $C_i \subset Y$ and replacing them with vanishing 3-spheres $L_{i,t} \subset X_t$. To understand this topological change geometrically, Fu-Li-Yau \cite{FLY} (see also \cite{Chuan,CPY} for follow-up work) construct a sequence of metrics $\omega_a$ on $Y$ with $d \omega_a^2=0$ and ${\rm Vol}(C_i,\omega_a) \rightarrow 0$ as $a \rightarrow 0$, and $\omega_t$ on $X_t$ with $d \omega_t^2=0$ with ${\rm Vol}(L_{i,t},\omega_t) \rightarrow 0$ as $t \rightarrow 0$. We will perturb $L_{i,t}$ to a special submanifold with respect to this geometry.

\par Our theorem states that a conifold transition exchanges the holomorphic 2-cycles $C_i$ on $(Y,\omega_a,\Omega)$ for special Lagrangian 3-cycles $\tilde{L}_{i}$ on $(X_t,\omega_t,\Omega_t)$.

\begin{thm} \label{thm:3spheres}
  Let $Y$ be a compact K\"ahler simply connected Calabi-Yau threefold. Let $Y \rightarrow \underline{Y} \rightsquigarrow X_t$ be a conifold transition, where $X_t$ is a compact complex manifold (which could be non-K\"ahlerian) with holomorphic volume form $\Omega_t$. Choose a node $p \in \underline{Y}$ and let $L_t$ denote the corresponding vanishing cycle on $X_t$. 

\par There exists $\epsilon>0$ such that for all $0<|t|<\epsilon$, the vanishing cycle $L_t$ can be perturbed to a rigid special Lagrangian 3-sphere $S^3$ with respect to the Fu-Li-Yau $(X_t,\Omega_t,\omega_t)$ non-K\"ahler Calabi-Yau structure. Explicitly, we solve the equations
  \[
\omega_t|_{S^3} =0, \quad ({\rm Im} \, e^{-i \hat{\theta}} \Omega_t)|_{S^3}=0, \quad d \omega_t^2=0
\]
for an angle $e^{-i \hat{\theta}} \in S^1$.
\end{thm}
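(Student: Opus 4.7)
My strategy is to recast the theorem as a parametrized implicit function theorem. The base point of the perturbation is the round $S^3$ that is exactly special Lagrangian in the model Ricci-flat K\"ahler Calabi-Yau structure on the smoothed conifold (the Stenzel / Candelas--de la Ossa geometry). The aim is then to perturb this model special Lagrangian into a genuine special Lagrangian for the balanced Fu-Li-Yau pair $(\omega_t, \Omega_t)$ on $X_t$. Near each node $p$, $X_t$ is biholomorphic to a neighborhood in $\{z_1^2 + z_2^2 + z_3^2 + z_4^2 = t\} \subset \mathbb{C}^4$ carrying the model Calabi-Yau pair $(\omega_{\mathrm{mod}}, \Omega_{\mathrm{mod}})$, and the real locus $L_t = \{x_i \in \mathbb{R},\ \sum x_i^2 = |t|\} \cong S^3$ is special Lagrangian for it with some phase. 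The first analytic step is thus a quantitative comparison: on a neighborhood of $L_t$ of diameter $\sim |t|^{1/2}$ and in an appropriate weighted H\"older norm, $(\omega_t, \Omega_t) - (\omega_{\mathrm{mod}}, \Omega_{\mathrm{mod}}) \to 0$ as $t \to 0$.

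Given this comparison, I would parametrize nearby submanifolds via the normal exponential of the model metric. Since $L_t$ is Lagrangian for $\omega_{\mathrm{mod}}$, the identification $NL_t \cong T^* L_t$ turns small normal sections into $1$-forms $\alpha \in \Omega^1(S^3)$, and the special Lagrangian system becomes a smooth Banach-space map
\[
F(\alpha, \hat{\theta}) = \bigl( \Phi_\alpha^* \omega_t,\ \Phi_\alpha^* \mathrm{Im}\,(e^{-i\hat{\theta}} \Omega_t) \bigr) \in \Omega^2(S^3) \oplus \Omega^3(S^3),
\]
where $\Phi_\alpha : L_t \to X_t$ is the embedding obtained by exponentiating the normal field dual to $\alpha$. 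The linearization of $F$ at $(0, \hat{\theta}_0)$ differs from McLean's operator $\alpha \mapsto (d\alpha,\ d^* \alpha \cdot \mathrm{vol}_{L_t})$ only by lower-order terms proportional to $(\omega_t, \Omega_t) - (\omega_{\mathrm{mod}}, \Omega_{\mathrm{mod}})$. Because $H^1(S^3) = H^2(S^3) = 0$ and the one-dimensional cokernel $H^0(S^3) = \mathbb{R}$ is absorbed by tuning the phase $\hat{\theta}$, the combined linearization in $(\alpha, \hat{\theta})$ is an isomorphism; this is the source of the rigidity asserted in the theorem.

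With invertible linearization and small initial defect $F(0, \hat{\theta}_0)$, a quantitative implicit function theorem, implemented for instance by Newton iteration in H\"older completions, yields a unique small solution $(\alpha_t, \hat{\theta}_t)$ whenever $|t|$ is below some $\varepsilon > 0$, and the resulting embedded $3$-sphere is the desired special Lagrangian. The main obstacle lies in the first step: obtaining the comparison between $(\omega_t, \Omega_t)$ and the local Calabi-Yau model in a norm strong enough to dominate the nonlinearity of $F$ and to maintain uniform ellipticity of the linearization as $t \to 0$. A crucial and fortunate point is that the special Lagrangian equations are purely tensorial in $\omega_t$ and $\Omega_t$, so the failure of $\omega_t$ to be K\"ahler never enters the equations directly; it affects only the geometry of the ambient metric on the neck region, where weighted function spaces adapted to the scale $|t|^{1/2}$ appear to be the correct analytic framework.
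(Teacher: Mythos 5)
Your overall strategy coincides with the paper's: take the round vanishing cycle, which is exactly special Lagrangian for the Candelas--de la Ossa model pair $(\omega_{co,t},\Omega_{mod,t})$, compare the Fu-Li-Yau data $(\omega_t,\Omega_t)$ to that model in weighted norms (the paper's estimates (\ref{eq:cot2check}) and Lemma \ref{lem:Omega-est}), parametrize nearby submanifolds by $1$-forms via the normal exponential map, and run a quantitative fixed-point argument off the operator $d+d^\dagger$. Two remarks on bookkeeping: the relevant length scale of $L_t$ in the balanced metric is $|t|^{1/3}$ (since $h_t=|t|^{2/3}h_1$), not $|t|^{1/2}$; and the paper does not solve for $\hat\theta$ as an extra unknown but fixes it a priori by the period condition ${\rm Im}\,(e^{-i\hat\theta}\int_{L_t}\Omega_t)=0$, which, combined with the deformation-invariance of $\int_{S^3}(\varphi[\alpha]\circ f)^*{\rm Im}\,e^{-i\hat\theta}\Omega_t$, makes the function component of the equation automatically orthogonal to constants (Lemma \ref{lem:rangeF}). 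Your alternative of absorbing the $H^0(S^3)$ cokernel by tuning $\hat\theta$ would also work for that one-dimensional obstruction.

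There is, however, a genuine gap in the sketch, located exactly where you assert that ``the failure of $\omega_t$ to be K\"ahler never enters the equations directly.'' It does, in two ways. First, the linearization of $\alpha\mapsto\Phi_\alpha^*\omega_t$ is not $d\alpha$ but $d\alpha+T\alpha$ with $(T\alpha)_{ij}=-h^{k\ell}\alpha_\ell(f^*\iota_Jd\omega_t)_{kij}$ (Section \ref{section:linearization}); this term is small here only because the local model is K\"ahler, so $d\omega_t=d(\omega_t-\omega_{co,t})$ is controlled by (\ref{eq:cot2check}) --- a point that must be made, not assumed. Second, and more seriously for your formulation: since $d\omega_t\neq 0$, the $2$-form $\Phi_\alpha^*\omega_t$ is not closed, hence not exact, so under the Hodge decomposition $\Lambda^2(S^3)=d\Lambda^1\oplus d^\dagger\Lambda^3$ it has a nontrivial coexact component which the leading part $\alpha\mapsto d\alpha$ of your linearization can never reach. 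Consequently the ``combined linearization in $(\alpha,\hat\theta)$'' is \emph{not} an isomorphism onto $\Omega^2(S^3)\oplus\Omega^3(S^3)$: its image misses the infinite-dimensional subspace $d^\dagger\Lambda^3$, and tuning the single parameter $\hat\theta$ cannot repair this. In the K\"ahler case this issue is invisible because $\Phi_\alpha^*\omega$ is automatically closed and $H^2(S^3)=0$ forces it into $d\Lambda^1$; in the balanced case it is precisely the new difficulty, and the paper's verification of the range of $\F$ (Lemma \ref{lem:rangeF}) together with the choice of the space $W_t=d\Lambda^*\oplus d^\dagger\Lambda^*$ and the full operator $d+d^\dagger$ is the device that addresses it. Your proposal needs an analogous mechanism before the implicit function theorem can be invoked.
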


\par Since $X$ may be not admit any K\"ahler metric, our notion of special Lagrangian does not involve a symplectic structure. We use the definition of Harvey-Lawson \cite{HarveyLawson} \S V.3., where a submanifold $L$ of a complex manifold $X$ with hermitian metric $\omega$ and holomorphic volume form $\Omega$ is said to be special Lagrangian with angle $e^{-i \hat{\theta}} \in S^1$ if $\omega|_L=0$ and $({\rm Im} \, e^{-i \hat{\theta}} \Omega)|_L = 0$. These equations were obtained by Harvey-Lawson (and later rederived in string theory \cite{BBS}) as a condition which minimizes the functional
\[
E(L) = \int_L |\Omega|_\omega d {\rm vol}_L
\]
in a given homology class. When $\omega$ is K\"ahler Ricci-flat, then $|\Omega|_\omega$ is constant and this is the area functional, but in general the norm $|\Omega|_\omega$ may fluctuate. Given a non-K\"ahler structure $(\omega,\Omega)$, special Lagrangian cycles can be understood as optimal representatives of a 3-cycle homology class with respect to the functional $E(L)$.

\par Our construction establishes the existence of smooth special Lagrangian cycles with respect to a balanced metric on any compact threefold emerging from a conifold transition. By Friedman's criteria \cite{Friedman,Tian92}, to guarantee the existence of a transition it suffices to check a cohomological condition on holomorphic curves on the initial threefold (\ref{Friedmans-cond}). For a concrete non-K\"ahler example, we may apply our theorem to conclude the existence of special Lagrangian submanifolds in connected sums of $h$ copies of $S^3 \times S^3$ with $h \geq 2$ \cite{Friedman,LuTian}.

\par For related results on special Lagrangian vanishing spheres and smoothings $\underline{Y} \rightsquigarrow X_t$ of nodal points in the case when $X_t$ is K\"ahler, see \cite{Chan,HeinSun}. Other constructions of special Lagrangian cycles in the K\"ahler setting can be found in e.g. \cite{CJL,CJL2,Joyce3spheres,JoyceIII,KarigiannisOo,YangLiSYZ,YangLiSYZ2,SchoenWolf,YuguanZhang}.

\par Following Callan-Harvey-Strominger \cite{CHS}, we can interpret our results as an analog of the Dirac magnetic monopole in electrodynamics. The magnetic monopole equations are
\[
\quad d^* F = 0, \quad Q_M = \int_{S^2} F
\]
where $F$ is the 2-form field strength and $Q_M$ is the magnetic charge enclosed in a 2-sphere. In string theory, there is a 3-form field strength $H$, and Strominger \cite{Strominger} found that on a Calabi-Yau threefold $X$ it is given by $H = i (\bar{\partial}-\partial) \omega$. If $\omega$ is a balanced metric on $X$, then $d \omega$ is primitive, and a well-known identity for the Hodge star gives $\star d \omega = J d \omega = -H$. Thus $H$ is co-closed, and the charge is defined analogously.
\[
d^* H = 0, \quad Q = \int_{S^3} H.
\]
In the case of smoothings $\underline{Y} \rightsquigarrow X_t$ with Fu-Li-Yau metric, our work gives estimates on the order of the charge attached to the degenerating special Lagrangian 3-spheres.
\[
  Q_t = \int_{S^3} H_t = O(|t|^{4/3}), \quad t \rightarrow 0.
\]
This can be derived from the estimates (\ref{eq:cot2check}) and (\ref{eq:U-defn}), which are used in the proof of our main theorem.

\par The paper is organised as follows. In Section~\ref{sec: BasicTheory}, we review generalities of special Lagrangian submanifolds in the non-K\"ahler setting and work out the equations for first order infinitesimal deformations of special Lagrangians. In Section~\ref{sec: Ex} we study some explicit examples of the deformation theory for special Lagrangians which show that the deformations behave rather differently than in the K\"ahler setting.  In Section~\ref{sec: backConifold}, we review the geometry of conifold transitions. In Section~\ref{sec: spheres}, we prove Theorem \ref{thm:3spheres} by a glueing and perturbation technique. The main idea is that the local K\"ahler Calabi-Yau model geometry $(V_t,\omega_{co,t},\Omega_{mod,t})$ of Candelas-de la Ossa \cite{CandelasdlOssa} on $V_t = \{ \sum_i z_i^2=t \}$ approximately describes the non-K\"ahler geometry of $(X_t,\omega_t,\Omega_t)$ near the vanishing cycles.  Finally, in Section~\ref{sec: phys} we discuss the relation to $SU(3)$ structures and flux compactifications in physics.
\bigskip

{\bf Acknowledgements:} We thank M. Garcia-Fernandez for helpful comments.

\section{Special Lagrangians without the K\"ahler condition}\label{sec: BasicTheory}
\subsection{Definitions} \label{section:defn}
We take the definition of a non-K\"ahler Calabi-Yau manifold as a complex manifold with trivial canonical bundle. Let $M$ be a compact complex manifold of complex dimension $n$, with complex structure $J$, admitting a nowhere vanishing holomorphic $(n,0)$ form $\Omega$. Let $g$ be a hermitian metric on $M$ with associated $(1,1)$ form $\omega(X,Y)=g(JX,Y)$. We define the function $| \Omega |_g$ by
\[
| \Omega |^2_g \, {\omega^n \over n!} =  {i^{n^2} \over 2^n} \Omega \wedge \bar{\Omega}.
\]
Harvey and Lawson \cite{HarveyLawson} introduced the definition of special Lagrangian cycles as special submanifolds of complex manifolds with trivial canonical bundle. An oriented submanifold $L$ of real dimension $n$ is special Lagrangian with angle $e^{-i \theta} \in S^1$ if 
\be \label{slag-defn}
\omega|_L =0, \quad {\rm Im} \, e^{-i \theta} \Omega|_L=0.
\ee
This equation was also derived by Becker-Becker-Strominger \cite{BBS} as a condition for $L$ to be a supersymmetric cycle. Though most subsequent work has been in the context of K\"ahler geometry \cite{BBS,GHJ,Hitchin,SYZ}, we return here to the general definition \cite{HarveyLawson} with possibly non-K\"ahler Hermitian metric $\omega$.
\smallskip
\par Submanifolds solving (\ref{slag-defn}) have the property that they minimize the functional
\be \label{conformal-vol}
E(L) = \int_L | \Omega |_g \, d {\rm vol}_L
\ee
in the homology class $[L] \in H_3(M,\mathbb{R})$. Here $d {\rm vol}_L$ is the volume form of the induced metric $g|_L$ on $L$. By a well-known pointwise computation \cite{HarveyLawson}, the condition (\ref{slag-defn}) is equivalent to an orientation on $L$ with $\omega|_L=0$, $e^{-i \theta} \Omega|_L = | \Omega |_g d {\rm vol}_L$.

\begin{rk}
  We can conformally change $\tilde{\omega}= | \Omega |^{2/n}_g \omega$ such that $| \Omega |_{\tilde{g}} = 1$. The special Lagrangian condition $\tilde{\omega}|_L=0$, ${\rm Im} \, e^{-i \theta} \Omega|_L=0$ is well-known \cite{HarveyLawson, JoyceIII} to minimize the area $\int_L d {\rm vol}_{\tilde{g}|_L}$ in a given homology class, using the closed form ${\rm Re} \, e^{-i \theta} \Omega$ as a calibration on $(M,\tilde{g})$. The discussion above rewrites this statement in terms of the original $\omega$ rather than $\tilde{\omega}$.
\end{rk}

When $d \omega = 0$ and $| \Omega |_g=1$, this is the standard special Lagrangian condition in a K\"ahler Calabi-Yau manifold. In the current work, we will not assume that $\omega$ is K\"ahler Ricci-flat, but instead require the weaker condition
\be \label{eq:confbal}
d (| \Omega |_g \omega^{n-1})=0
\ee
which is a condition for supersymmetry \cite{Strominger,LY}. In analogy with (\ref{conformal-vol}), it was observed in \cite{GRST} that metrics solving (\ref{eq:confbal}) are critical points of the functional
\[
\mathcal{M}(\omega) = \int_M |\Omega|_g \, d {\rm vol}_g
\]
when restricted to variations of the form $\delta \omega \in  {\rm Range} \, ( \partial \oplus \bar{\partial})$. Thus the special Lagrangian equations (\ref{slag-defn}) and the conformally balanced equation (\ref{eq:confbal}) are linked by the fact that they are both critical points of a functional of the form $\int |\Omega| \, d {\rm vol}$.

\par Metrics solving $d \omega^{n-1}=0$, called balanced metrics, were studied by Michelsohn \cite{Michelsohn} as a dual to the K\"ahler condition.  Metrics satisfying (\ref{eq:confbal}) are called conformally balanced. One can pass between these two notions by conformal change of the metric; a balanced metric $\chi$ defines a conformally balanced metric $\omega$ by $\omega = |\Omega|_\chi^{-2/(n-2)} \chi$, and a conformally balanced metric $\omega$ defines a balanced metric $\chi$ by $\chi= | \Omega |_\omega^{1/(n-1)} \omega$.

\subsection{Linearization} \label{section:linearization}
Next, we compute the linearization of the special Lagrangian condition. We will suppose that we have a family $L_s$ of special Lagrangians on a (possibly non-K\"ahler) Calabi-Yau manifold and study its first order variation. This calculation is well-known in the K\"ahler case, e.g. \cite{Marshall,McLean}.
\smallskip
\par Let $f: L \rightarrow M$ be a $n$-dimensional real submanifold of $(M,g,J,\omega,\Omega)$. Denote by $h=f^* g$ the induced metric on $L$ and $d {\rm vol}|_L$ the volume form of $h$. Suppose $L$ is a special Lagrangian with angle zero: $f^* \omega=0$ and $f^* ({\rm Im} \, \Omega)=0$ and $L$ is oriented so that $f^*{\rm Re} \, \Omega = | \Omega | d {\rm vol}_L$. Let $\alpha = \alpha_i du^i$ be a 1-form on $L$. Let $V= V^i {\partial \over \partial u^i}$ with $V^i = h^{ik} \alpha_k$ be the corresponding vector field on $L$, which on $M$ becomes $f_* V = V^i \partial_i f^\alpha \partial_\alpha$. 
\smallskip
\par The vector field $\xi:= J f_* V$ defined on $f(L) \subset M$ is a normal vector field since $f^* \omega=0$. Extend $\xi$ arbitrarily to all of $M$. Let $\varphi_s$ be a 1-parameter family of diffeomorphisms of $M$ generated by the flow of $\xi$. Let
\[
\F_s : \Lambda^1(L,\mathbb{R}) \rightarrow \Lambda^2(L,\mathbb{R}) \times C^\infty(L,\mathbb{R})
\]
be given by
\[
\F_s(\alpha) = \Big( -(\varphi_s \circ f)^* \omega, \, - \star_h (\varphi_s \circ f)^*{\rm Im } \, \Omega \Big).
\]
The condition that $\varphi_s$ deforms $L$ to a family of special Lagrangian submanifolds is $\F_s(\alpha) = 0$ for all $s \in (-\epsilon,\epsilon)$.
\smallskip
\par Differentiating $\F_s$ introduces the Lie derivative
\[
{d \over ds} \bigg|_{s=0} \F_s = \Big( -f^* (L_{\xi} \omega), \, -\star_h f^* (L_{\xi}{\rm Im} \, \Omega) \Big) .
\]
We will now compute the two terms on the right-hand side.
\smallskip
\par $\bullet$ $-f^*(L_\xi \omega)$. By the formula for the Lie derivative, this is
\[
-f^*(L_\xi \omega)=  - f^* (d \iota_{J f_*V} \omega + \iota_{J f_*V} d \omega).
\]
For the first term, tracing the definitions gives
\[
\alpha = - f^* (\iota_{J f_* V} \omega).
\]
For the second term, we write
\[
(f^* \iota_{J f_* V} d \omega)_{i j} = V^k (f^* \iota_J d \omega)_{k i j},
\]
where $(\iota_J d\omega)(X,Y,Z) = d \omega(JX,Y,Z)$. Therefore, we have
  \be \label{eq:LieDeriv1}
-f^* (L_{\xi} \omega) = d \alpha + T \alpha,
  \ee
  where $T: \Lambda^1(L) \rightarrow \Lambda^2(L)$ is defined by
\be \label{torsion-operator}
(T \alpha)_{i j} = - h^{k \ell} \alpha_\ell (f^* \iota_J d \omega)_{k i j}.
\ee
The $T \alpha$ terms are the non-K\"ahler contributions to the linearized operator.
\smallskip
\par $\bullet$ $-\star_h f^* (L_\xi {\rm Im} \, \Omega)$. Since $d \Omega = 0$, this is
\[
f^* (L_\xi {\rm Im} \, \Omega) = d f^* (\iota_\xi {\rm Im} \, \Omega).
\]
A well-known pointwise computation (e.g. \cite{Marshall}) gives
\be \label{eq:xi-Omega}
f^* (\iota_\xi {\rm Im} \Omega) =  \star_h (f^*| \Omega |_g \alpha).
\ee
To verify this, we start by choosing $p \in L$. Using $\omega|_L=0$ we may choose coordinates $(x_i,y_i)$ on $M$ so that
\[
T_pL = {\rm span} \bigg( {\partial \over \partial x_1} , \dots , {\partial \over \partial x_n} \bigg)
  \]
\[
g|_p = dx_i \otimes dx_i + dy_i \otimes dy_i, \quad J {\partial \over \partial x_i} = {\partial \over \partial y_i}, \quad J {\partial \over \partial y_i}= - {\partial \over \partial x_i}.
\]
Using ${\rm Im} \, \Omega|_L = 0$, we can arrange that 
\[
\Omega|_p = | \Omega |_g (dx_1+idy_1) \wedge \cdots \wedge (dx_n + i dy_n).
\]
We take $\alpha|_p = dx_1$ and can now verify (\ref{eq:xi-Omega}) by direct computation. Thus
    \[
      \star_h f^* (L_\xi {\rm Im} \, \Omega) = \star_h d \star_h (f^*| \Omega |_g \alpha) = - d^\dagger_h (f^* | \Omega |_g \alpha).
    \]
\par $\bullet$ Altogether, the variational formula  becomes
\be \label{eq:variation-formula}
\Big( -f^* (L_{\xi} \omega), \, - \star_h f^* (L_{\xi}{\rm Im} \, \Omega) \Big)= \bigg (d \alpha + T \alpha, \, d^\dagger_h (f^*| \Omega |_g \alpha) \bigg) .
\ee
The right-hand side is zero if and only if $\alpha$ satisfies $\LL \alpha = 0$, with
\be
\LL \alpha = | \Omega |^{-1} d_T^\dagger ( | \Omega | d_T \alpha) + d ( | \Omega |^{-1} d^\dagger ( | \Omega | \alpha)).
\ee
and $d_T = d + T$. Here $| \Omega |$ is notation for $f^* | \Omega |_g$ and adjoints $\dagger$ are with respect to the $L^2$ inner product on $(L,h)$. The operator $\LL$ is elliptic and can be deformed to the Hodge Laplacian, and so $\ker \LL$ is finite dimensional.  We state this as

\begin{lem}
The infinitesimal deformation space of a non-K\"ahler special Lagrangian $L\subset (X,g,\omega, \Omega)$ is given by the space of smooth $1$-forms $\alpha \in T^* L$ satisfying
\[
d\alpha + T\alpha =0 \qquad  d^\dagger (| \Omega |_\omega \alpha)=0
\]
where $T:T^*L \rightarrow \Lambda^2T^*L$ is defined by
\[
(T \alpha)_{i j} = - (g|_L)^{k \ell} \alpha_\ell (\iota_J d \omega)_{k i j}.
\]
Furthermore, the infinitesimal deformation space is finite dimensional.
\end{lem}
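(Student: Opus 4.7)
The plan is to read off the first assertion directly from the computation leading to equation (\ref{eq:variation-formula}), and then package the pair of equations into a single weighted Hodge-type Laplacian on $L$ to obtain finite-dimensionality.

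For the first assertion, essentially all the analytic work has already been carried out above the statement. An infinitesimal deformation of $L$ in $M$ is determined by the normal component of a generator $\xi$ along $f(L)$; since $f^*\omega = 0$ and the real dimensions match, every normal vector field has the form $\xi = J f_* V$ for a unique tangent vector field $V$ on $L$, and setting $\alpha = h(V, \cdot)$ gives the associated $1$-form. Equation (\ref{eq:variation-formula}) then identifies the derivatives at $s=0$ of $(\varphi_s \circ f)^*\omega$ and of $\star_h (\varphi_s \circ f)^*\mathrm{Im}\,\Omega$ with $-(d\alpha + T\alpha)$ and $d^\dagger_h(|\Omega|_g \alpha)$ respectively, so the infinitesimal variation preserves the special Lagrangian system if and only if the two stated equations hold.

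For the finite-dimensionality I would use the operator $\mathcal{L}$ already displayed in the excerpt, viewed as an operator on the compact Riemannian manifold $(L,h)$. Equip $\Lambda^\bullet(L)$ with the weighted inner product $\langle \beta_1,\beta_2\rangle_{|\Omega|} := \int_L |\Omega|\,\langle \beta_1,\beta_2\rangle_h\, d\mathrm{vol}_h$, with respect to which $d$ and $d_T = d + T$ have formal adjoints $|\Omega|^{-1} d^\dagger(|\Omega|\,\cdot\,)$ and $|\Omega|^{-1} d_T^\dagger(|\Omega|\,\cdot\,)$; thus $\mathcal{L}$ is manifestly self-adjoint and non-negative, and a direct integration by parts yields
\[
\langle \mathcal{L}\alpha,\alpha\rangle_{|\Omega|} = \bigl\||\Omega|^{1/2}(d\alpha + T\alpha)\bigr\|^2 + \bigl\||\Omega|^{-1/2} d^\dagger(|\Omega|\alpha)\bigr\|^2.
\]
Hence the joint solution set of the two equations in the statement coincides with $\ker \mathcal{L}$. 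Since $T$ is pointwise linear in $\alpha$ and $|\Omega|$ is a positive smooth function, $\mathcal{L}$ has the same principal symbol as the Hodge Laplacian $dd^\dagger + d^\dagger d$, so $\mathcal{L}$ is elliptic on the compact manifold $L$ and its kernel is finite-dimensional by standard elliptic theory.

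The only place one has to be mildly careful is the bookkeeping for the weighted adjoints and the observation that the torsion operator $T$, being of order zero, does not affect the principal symbol; with these checks in hand, the rest of the argument is routine.
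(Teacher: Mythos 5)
Your proposal is correct and follows essentially the same route as the paper: the first assertion is read off from the variational formula (\ref{eq:variation-formula}), and finite-dimensionality comes from the ellipticity of $\LL$, whose principal symbol agrees with that of the Hodge Laplacian. Your weighted inner product $\langle \cdot,\cdot\rangle_{|\Omega|}$ and the integration by parts identity $\langle \LL\alpha,\alpha\rangle_{|\Omega|} = \| |\Omega|^{1/2} d_T\alpha\|^2 + \| |\Omega|^{-1/2} d^\dagger(|\Omega|\alpha)\|^2$ are a welcome addition, since they make explicit the step --- asserted but not justified in the text --- that the joint solution set of the two first-order equations coincides with $\ker\LL$.
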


\begin{rk}
We note that that in the symplectic case $d \omega =0$, the constraint equations are $d\alpha=0$ and $d^\dagger (| \Omega | \alpha)=0$ and we recover the well-known fact 
\[
\dim \ker \LL = \dim H^1(L,\mathbb{R}).
\]
When $| \Omega |$ is constant, this is McLean's theorem \cite{McLean}.  In the case of non-constant $| \Omega |$, Goldstein \cite{Goldstein} showed that the map $\xi: \ker \LL \rightarrow H^1(L,\mathbb{R})$ given by $\alpha \mapsto [\alpha]$ is an isomorphism. This map is injective since if $\alpha=df$, then $d^\dagger(| \Omega | df)=0$ implies $f$ is constant. To show surjectivity, given $[\alpha] \in H^1(L,\mathbb{R})$ we must find a representative $\alpha' = \alpha + df$ such that $d^\dagger(| \Omega | \alpha')=0$, which amounts to solving
\[
  P(f) = \psi, \quad P(f) =  d^\dagger(| \Omega | df)
\]
with 
\[
\psi = - d^\dagger( | \Omega | \alpha).
\]
The operator $P$ is elliptic and $\ker P = \langle \mathbb{R} \rangle$. Therefore ${\rm Range}(P) = (\ker P)^\perp$ is given by functions $f$ such that $\int_L f = 0$. Since $\int_L \psi = 0$, we can solve $P(f) = \psi$.
\end{rk}

\begin{rk}
It is unclear whether the deformation theory of special Lagrangians in the non-K\"ahler case is unobstructed; that is, whether a version of McLean's theorem holds \cite{McLean}.  Indeed, the examples in the next section have unobstructed deformation theory.
\end{rk}

\section{Examples: Manifolds of Iwasawa-type}\label{sec: Ex}

In this section we work out an explicit example of the deformation theory for certain special Lagrangians in some non-K\"ahler Calabi-Yau manifolds.  One classical example of a compact complex manifold which is Calabi-Yau, but non-K\"ahler is the Iwasawa manifold, which can be defined as follows.  Consider the set of matrices
\[
G = \begin{pmatrix} 1 & z_1 & z_3\\ 0&1&z_2\\0&0&1 \end{pmatrix}.
\]
The Iwasawa manifold is given by $X_I:= G/\Gamma$ where $\Gamma \subset G$ is the subgroup consisted of those elements where $z_i \in \mathbb{Z}[\sqrt{-1}]$ for $i=1,2,3$.  It is straightforward to check that $X_I$ is complex, Calabi-Yau but non-K\"ahler; see, e.g. \cite[Chapter 3.5]{GriffithsHarris}.  There is a holomorphic fibration structure
\[
\pi:X_I \rightarrow T^4 \qquad \pi(z_1,z_2,z_3) = (z_1,z_2)
\]
whose fibers are complex tori.  The holomorphic $(1,0)$ forms 
\[
dz_1,\quad dz_2, \quad \theta_I:= dz_3-z_1dz_2
\]
are globally well-defined and trivialize $\Lambda^{1,0}T^*X_{I}$.  If ${u(z_1,z_2): T^4 \rightarrow \mathbb{R}}$ is any smooth, real function, then we can consider the hermitian form
\[
\omega_u:= \pi^*(e^{u}\omega_{T^4}) + \frac{\sqrt{-1}}{2}\theta_I\wedge \bar{\theta}_I
\]
where
\[
\omega_{T^4} = \frac{\sqrt{-1}}{2}\left(dz_1\wedge d\bar{z}_1 + dz_2\wedge d\bar{z}_2\right).
\]
It is easy to check that $\omega_u$ defines a hermitian metric, and furthermore $d\left(|\Omega|_{\omega_u} \omega_u^2\right) =0$, hence these metrics are conformally balanced.  Consider the anti-holomorphic involution $(z_1,z_2,z_3)\mapsto (\bar{z}_1,\bar{z}_2,\bar{z}_3)$.  The fixed point set is a special Lagrangian
\[
L = \{ {\rm Im}(z_1)= {\rm Im}(z_2) = {\rm Im}(z_3)=0\}
\]
In fact, it is not hard to check that $L$ moves in a $1$-parameter family given by
\[
L_{t}:= \{ {\rm Im}(z_1)= {\rm Im}(z_2) =0, \, {\rm Im}(z_3)=t\}.
\]
In this case, the main result of this section is
\begin{lem}\label{lem: Iwasawa}
Let $(X_I, \omega_u)$ be the Iwasawa manifold, as above.  The infinitesimal deformation space of the special Lagrangian $L$ is $1$-dimensional, while $b_1(L)=2>1$.  In particular, the infinitesimal deformations of the special Lagrangian $L$ are unobstructed.
\end{lem}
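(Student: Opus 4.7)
The plan is to exploit the Heisenberg nilmanifold structure of $L$ together with the $S^1$-symmetry given by $x_3$-translation to reduce the deformation equations to elliptic PDE on $T^2$.

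First, I identify the topology and geometry of $L$. As the fixed set of the anti-holomorphic involution $z \mapsto \bar z$ on $G$, the submanifold $L$ descends to the $3$-dimensional real Heisenberg nilmanifold, whose fundamental group abelianizes to $\mathbb{Z}^2$, giving $b_1(L)=2$. Setting $\eta := dx_3 - x_1 dx_2$ and $\tilde u := u|_{y_1 = y_2 = 0}$, pulling $\omega_u$ back to the tangent frame $\partial_{x_1}, \partial_{x_2}, \partial_{x_3}$ gives the induced metric $h|_L = e^{\tilde u}(dx_1^2 + dx_2^2) + \eta^2$, and a standard computation yields $|\Omega|_{\omega_u} = e^{-u}$.

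For existence and unobstructedness, I would exhibit the explicit family $L_t = \{y_1 = y_2 = 0,\, y_3 = t\}$. One checks directly that $\omega_u|_{L_t} = 0$ and $\mathrm{Im}\,\Omega|_{L_t} = 0$ for all $t$; conceptually, the generator $\xi = \partial/\partial y_3$ is a symmetry of $(\omega_u, \Omega)$, since $L_\xi \omega_u = 0$ and $L_\xi \Omega = 0$ follow from Cartan's formula together with $\iota_\xi\omega_u=-a$, $\iota_\xi d\omega_u = da$, $\iota_\xi\Omega = i\,dz_1\wedge dz_2$. The corresponding $1$-form on $L$ is $\alpha = \eta$, which one verifies satisfies $d\eta + T\eta = 0$ and $d^\dagger(e^{-\tilde u}\eta) = 0$ by direct substitution into the formulas of Section~\ref{section:linearization}.

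For uniqueness, I would use the $S^1$-action on $L$ by $x_3$-translation, which preserves $h|_L$ and $|\Omega|$ and commutes with $\LL$. On the trivial ($S^1$-invariant) mode, writing $\alpha = p\, dx_1 + q\, dx_2 + r\, \eta$ with $p, q, r$ functions of $(x_1, x_2)$, two components of $d\alpha + T\alpha = 0$ yield $p = e^{\tilde u}\partial_{x_2} r$ and $q = -e^{\tilde u}\partial_{x_1} r$; substituting into the third component and cancelling the $\nabla \tilde u$-dependent terms leaves $\partial_{x_1}^2 r + \partial_{x_2}^2 r = 0$ on $T^2$, forcing $r$ constant and $p = q = 0$. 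For the non-invariant mode with $\partial_{x_3}$-eigenvalue $i\lambda\neq 0$, the main obstacle is to rule out solutions: cross-differentiating the $dx_1 dx_3$ and $dx_2 dx_3$ components via the Heisenberg commutator $[E_1, E_2] = E_3$ yields the identity $\Delta r = i\lambda e^{-\tilde u}(\tilde u_{x_1} p + \tilde u_{x_2} q)$ for the metric Laplacian, and combining this with the $d^\dagger$-equation and the twisted-section (theta-function) structure of $\lambda$-modes on the circle bundle $L \to T^2$ should force $p = q = r = 0$, completing the proof that $\dim\ker\LL = 1$.
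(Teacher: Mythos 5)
Your overall strategy (Fourier decomposition along the central $S^1$ of the Heisenberg nilmanifold $L$) is reasonable, and the pieces you do carry out are correct: the identification $b_1(L)=2$ via the abelianization of the integer Heisenberg group, the induced metric and $|\Omega|_{\omega_u}=e^{-u}$, the explicit family $L_t$ generated by the symmetry $\partial/\partial y_3$ with corresponding kernel element $\alpha=\eta$, and the invariant-mode computation. In the invariant sector your relations $p=e^{\tilde u}\partial_{x_2}r$, $q=-e^{\tilde u}\partial_{x_1}r$ do follow from two of the components of $d\alpha+T\alpha=0$ (this is exactly where the torsion term $T$ kills the two extra harmonic directions coming from $b_1=2$), and substitution into the remaining component does reduce to $\partial_{x_1}^2 r+\partial_{x_2}^2 r=0$ on $T^2$, so $r$ is constant and $p=q=0$. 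That correctly accounts for the $1$-dimensional space.

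The genuine gap is the non-invariant modes. The statement that the identity $\Delta r = i\lambda e^{-\tilde u}(\tilde u_{x_1}p+\tilde u_{x_2}q)$ ``combined with the $d^\dagger$-equation and the theta-function structure \emph{should} force $p=q=r=0$'' is not an argument: nothing you have written rules out a nonzero $\lambda$-eigenmode, and this is most of the content of the dimension count. The paper proceeds differently and avoids the mode decomposition altogether: differentiating the coclosedness equation in the frame directions $\del_1,\del_2$ and using the commutator $[\del_1,\del_2]=\del_3$ together with the remaining equations, one derives the single scalar equation $\big(\del_1^2+\del_2^2+e^{u}\del_3^2\big)(e^{-u}\alpha_1)=0$ on the compact manifold $L$ (and likewise for $e^{-u}\alpha_2$; in the Iwasawa case these decouple), so the maximum principle gives that $e^{-u}\alpha_1$ and $e^{-u}\alpha_2$ are constant, and evaluating the remaining equations at an extremum of $\alpha_3$ forces them to vanish. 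If you want to keep your mode-by-mode framework, the fix is to derive this same decoupled equation on each $\lambda$-eigenspace, where it reads $\big(\partial_{x_1}^2+\partial_{x_2}^2-\lambda^2 e^{u}\big)(e^{-u}\beta_1)=0$ for a twisted section $\beta_1$ over $T^2$; since the zeroth-order coefficient $-\lambda^2 e^{u}$ is strictly negative for $\lambda\neq 0$, integration by parts (or the maximum principle) forces $\beta_1=0$, and similarly for the other components. Without some such argument the claim $\dim\ker\LL=1$ is unproved.
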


\begin{rk}
We have emphasized that $b_1(L)=2 >1$ to make it clear that the deformation theory for special Lagrangians in non-K\"ahler Calabi-Yau manifolds is very different from the deformation theory in K\"ahler Calabi-Yau manifolds.
\end{rk}

We will explain the proof of Lemma~\ref{lem: Iwasawa} at the end of this section.  We also consider a related example, given by $X:= \mathbb{C}^3/\sim$ where we defined
\[
(z_1,z_2,z_3) \sim (z_1+a, z_2+c, z_3+\bar{a}z_2+b)
\]
where $a,b,c \in \mathbb{Z}[\sqrt{-1}]$.  Note that the construction of $X$ differs from the construction of the Iwasawa manifold only in the introduction of the complex conjugate of $a$ in the action on $z_3$.  Nevertheless, $X$ is a smooth, complex, Calabi-Yau manifold and non-K\"ahler \cite{PicNotes}.  As in the case of the Iwasawa manifold there is a holomorphic fibration structure
\[
\pi:X \rightarrow T^4 \qquad \pi(z_1,z_2,z_3) = (z_1,z_2)
\]
whose fibers are complex tori.  There are non-vanishing $(1,0)$ forms 
\[
dz_1,\quad dz_2, \quad \theta:= dz_3-\bar{z}_1dz_2
\]
which are globally well-defined and trivialize $\Lambda^{1,0}T^*X$.  However, note that $d\theta \in \Lambda^{1,1}T^*X$, unlike the case of the Iwasawa manifold.  As above, if ${u(z_1,z_2): T^4 \rightarrow \mathbb{R}}$ is any smooth, real function, then we can consider the hermitian form
\[
\omega_u:= \pi^*(e^{u}\omega_{T^4}) + \frac{\sqrt{-1}}{2}\theta \wedge \bar{\theta}
\]
which are again conformally balanced. As before, there is a one-parameter family of special Lagrangians 
\[
L_{t}:=  \{ {\rm Im}(z_1)= {\rm Im}(z_2) =0, \, {\rm Im}(z_3)=t\}
\]
and it is the deformation theory of $L:=L_0$ that we will work out.  Before stating the main result, we point out that the primary interest of this example is that it serves as a local toy model for the manifolds constructed by Calabi-Eckmann \cite{CalEck}, generalized by Goldstein-Prokushkin \cite{GP} and exploited by Fu-Yau \cite{FY} in their construction of solutions of the Strominger system.  The main result is

\begin{lem}\label{lem: Fu-Yau}
Let $(X, \omega_u)$ be the compact Calabi-Yau manifold constructed above.  The infinitesimal deformation space of the special Lagrangian $L$ is $1$-dimensional, while $b_1(L)=2>1$.  In particular, the infinitesimal deformations of the special Lagrangian $L$ are unobstructed.
\end{lem}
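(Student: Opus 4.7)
The plan is to identify $L$ explicitly as a nilmanifold, write the deformation equations in an adapted global frame, and reduce the resulting elliptic system to an eigenvalue problem for a self-adjoint operator whose purely imaginary eigenvalue forces all nonzero Fourier modes along the natural $S^1$-symmetry to vanish.

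First I identify $L$. The involution $(z_1, z_2, z_3) \mapsto (\bar z_1, \bar z_2, \bar z_3)$ descends to $X$ since $\mathbb{Z}[i]^3$ is conjugation-invariant; restricting to the real locus forces the lattice parameters $a, b, c$ to be real integers, so $L = \mathbb{R}^3 / \Gamma_\mathbb{R}$, where $\Gamma_\mathbb{R}$ is the discrete real Heisenberg group $(x_1, x_2, x_3) \sim (x_1 + a, x_2 + c, x_3 + a x_2 + b)$ with $a, b, c \in \mathbb{Z}$. Its abelianization has rank $2$, so $b_1(L) = 2$. The forms $\eta^1 = e^{u/2} dx_1$, $\eta^2 = e^{u/2} dx_2$, $\eta^3 = dx_3 - x_1 dx_2$ descend to a global orthonormal coframe on $(L, g|_L)$, with dual vector fields $e_1, e_2, e_3$ satisfying $[e_1, e_2] = e_3$ up to the $e^{-u/2}$ conformal factors.

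Next I compute the torsion tensor $T$. The key structural fact is that the $(1,0)$-form $\theta = dz_3 - \bar z_1 dz_2$ satisfies $\theta(e_1) = \theta(e_2) = \theta(Je_1) = \theta(Je_2) = 0$ on $L$, while $\theta(e_3) = \bar\theta(e_3) = 1$ and $\theta(Je_3) = i$. Since $d\omega_u = e^u du \wedge \omega_{T^4} + \tfrac{i}{2}(d\theta \wedge \bar\theta - \theta \wedge d\bar\theta)$, a direct evaluation of $\iota_J d\omega$ on the orthonormal frame (taking $u$ constant for brevity) yields
\[
T \eta^1 = \eta^2 \wedge \eta^3, \quad T \eta^2 = \eta^1 \wedge \eta^3, \quad T \eta^3 = \eta^1 \wedge \eta^2.
\]
Combined with $d\eta^3 = -\eta^1 \wedge \eta^2$ (and $d\eta^1 = d\eta^2 = 0$), the form $\eta^3$ lies in $\ker(d + T)$ and is co-closed, so $\eta^3 \in \ker \LL$ and $\dim \ker \LL \geq 1$.

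For the reverse inequality I exploit the free $S^1$-action $z_3 \mapsto z_3 + is$, which preserves $(\omega_u, \Omega)$ since $u$ is independent of $z_3$. Writing $\alpha = \sum_n (F_1^{(n)} \eta^1 + F_2^{(n)} \eta^2 + F_3^{(n)} \eta^3) e^{2\pi i n x_3}$, each $F_j^{(n)}$ is a quasi-periodic section of a line bundle of degree $|n|$ over $T^2$. Three of the four scalar equations in $(d+T) \alpha = 0$, $d^\dagger(|\Omega| \alpha) = 0$ assemble into the first-order eigenvalue equation
\[
\mathcal{D}_n \Phi^{(n)} = 2\pi i n \, \Phi^{(n)}, \qquad \mathcal{D}_n = \begin{pmatrix} 0 & 1 & D_1 \\ 1 & 0 & D_2 \\ -D_1 & -D_2 & 0 \end{pmatrix},
\]
with $D_1 = \partial_{x_1}$ and $D_2 = \partial_{x_2} + 2\pi i n x_1$ anti-self-adjoint on sections. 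An entry-by-entry check using $D_j^\dagger = -D_j$ yields $\mathcal{D}_n^\dagger = \mathcal{D}_n$; since self-adjoint operators have real spectrum, the purely imaginary eigenvalue $2\pi i n$ forces $\Phi^{(n)} = 0$ for every $n \neq 0$. The zero mode reduces to $F_2 = -\partial_{x_1} F_3$, $F_1 = -\partial_{x_2} F_3$, $\partial_{x_1} \partial_{x_2} F_3 = 0$ and $\partial_{x_1}^2 F_3 = \partial_{x_2}^2 F_3$; periodicity on $T^2$ then forces $F_3$ constant and $F_1 = F_2 = 0$, giving $\alpha = c \eta^3$. Hence $\dim \ker \LL = 1$. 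Since the family $L_t$ realizes this infinitesimal deformation with tangent $\partial_{y_3}$, the deformations are unobstructed.

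The main obstacle is handling non-constant $u$: the conformal weight $|\Omega|_{\omega_u} = e^{-u}$ modifies $d^\dagger$ in the second equation, and the $du \wedge \omega_{T^4}$ piece of $d\omega_u$ introduces additional first-order terms into $T$ proportional to $\partial_i u$. Both effects can be absorbed by replacing the $L^2$-pairing in the Fourier reduction with the weighted pairing $\int_{T^2} F \bar G \, e^{-u} dx_1 dx_2$ and verifying that $\mathcal{D}_n$ remains self-adjoint; the key algebraic point is that the Heisenberg twist $2\pi i n x_1$ in $D_2$ is purely imaginary, so the anti-self-adjointness of $D_j$ is preserved under the weight.
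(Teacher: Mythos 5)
Your proof takes a genuinely different route from the paper's, and its core is sound. The paper gets $b_1(L)=2$ from the $T^2$-bundle-over-$S^1$ description and the monodromy action (equivalent to your abelianization of the discrete Heisenberg group), and proves that the deformation space is $1$-dimensional by a decoupling trick: differentiating the co-closedness equation \eqref{eq: coClosedDef} along $\del_1$ and $\del_2$, using $[\del_1,\del_2]=\del_3$ together with the curl equation \eqref{eq: 12eq} to extract a second-order elliptic scalar equation for each of $e^{-u}\alpha_1$ and $e^{-u}\alpha_2$, and invoking the maximum principle. Your route --- Fourier decomposition along the $x_3$-circle, packaging three of the four equations as $\mathcal{D}_n\Phi^{(n)}=2\pi i n\,\Phi^{(n)}$ with $\mathcal{D}_n$ formally self-adjoint, and killing all $n\neq 0$ modes by reality of the spectrum --- is more structural: it explains why only the $x_3$-invariant deformation survives, and your zero-mode analysis reproduces the paper's conclusion $\alpha_1=\alpha_2=0$, $\alpha_3=\mathrm{const}$. (That $\mathcal{D}_n$ is not elliptic is harmless: you only need the formal identity $\lambda\|\Phi\|^2=\langle\mathcal{D}_n\Phi,\Phi\rangle=\bar{\lambda}\|\Phi\|^2$ applied to the smooth solution.) Your computations of $T$, of $(d+T)\eta^3=0$, and of the mode-$n$ system agree with \eqref{eq: TaFinal} and \eqref{eq: da+Ta} when $u$ is constant.

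The one step that fails as written is the reduction for non-constant $u$. The weighted pairing $\int_{T^2}F\bar{G}\,e^{-u}dx_1dx_2$ does \emph{not} keep $D_1=\del/\del x_1$ anti-self-adjoint: its adjoint for that pairing is $-\del_{x_1}+(\del_{x_1}u)$, and this extra real zeroth-order term is not cured by the imaginarity of the Heisenberg twist --- the problem is the derivative hitting the weight, not the twist. The repair is close by. Put $\beta_1=e^{-u}\alpha_1$, $\beta_2=e^{-u}\alpha_2$, $\beta_3=\alpha_3$; since $u$ is independent of $x_3$, the second and third equations of \eqref{eq: da+Ta} together with \eqref{eq: coClosedDef} become, on the $n$-th Fourier mode $G^{(n)}$, the generalized eigenvalue problem
\[
\mathcal{D}_n G^{(n)}=2\pi i n\, M\,G^{(n)},\qquad M=\mathrm{diag}(e^{u},e^{u},1),
\]
with exactly your matrix $\mathcal{D}_n$ and with $D_1,D_2$ anti-self-adjoint for the \emph{unweighted} flat measure. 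Pairing with $G^{(n)}$ gives $2\pi i n\,\langle MG^{(n)},G^{(n)}\rangle=\langle\mathcal{D}_nG^{(n)},G^{(n)}\rangle\in\mathbb{R}$, and positivity of $M$ forces $G^{(n)}=0$ for all $n\neq 0$. The remaining first equation of \eqref{eq: da+Ta} is precisely \eqref{eq: 12eq}, i.e.\ $\del_1\beta_2=\del_2\beta_1$, and the zero-mode analysis then runs exactly as you wrote with $\beta_j$ in place of $F_j$. With this substitution your argument is complete for arbitrary $u(x_1,x_2)$.
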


\begin{proof}[Proof of Lemma \ref{lem: Fu-Yau}]
If we write $z_k = x_k + i y_k$, consider
\[
L_t = \{y_1=y_2=0, \, y_3=t \}.
\]
and let $L=L_0$.  One can check that, for each $t$, $L_t$ defines a smooth, closed submanifold of $X$, while direct computation gives
\[
\omega|_L=0, \quad \Omega|_L = | \Omega |_\omega d {\rm vol}_L.
\]

The goal is to try to understand the deformation theory of the special Lagrangian $L$.  We need to compute the operator
\[
T\alpha = -\alpha_kg^{k\ell}(\iota_{J}d\omega)_{\ell i j}
\]
on $L$.  The metric $g_u$ associated to the form $\omega_u= g_u(J\cdot, \cdot)$ is
\[
g_u =e^{u}\sum_{i=1}^{2}(dx_i^2+dy_i^2) + {\rm Re}(\theta \otimes \bar{\theta})
\]
and so
\[
g_{u}\big|_{L} = e^{u}(dx_1^2+dx_2^2) + (dx_3-x_1dx_2)^2.
\]
Note that $T^*L$ has a global frame given by the $1$-forms
\[
dx_1, \quad dx_2,\quad (dx_3-x_1dx_2).
\]
and since $u=u(x_1,x_2)$ is independent of $x_3$, each of the above $1$-forms is co-closed, as can be seen from the equations
\begin{equation}\label{eq: hodgeStarFrame}
\begin{aligned}
 \star dx_1&= dx_2\wedge (dx_3-x_1dx_2) \\
  \star dx_2&= -dx_1\wedge (dx_3-x_1dx_2) \\
  \star (dx_3-x_1dx_2) &= e^{u}dx_1\wedge dx_2.
  \end{aligned}
  \end{equation}
Given a $1$-form $\alpha$ we can write it as
 \begin{equation}\label{eq: alphaGlobTriv}
 \alpha = \alpha_1dx_1+\alpha_2dx_2 + \alpha_3(dx_3-x_1dx_2).
 \end{equation}
 The $g_u$-dual vector field is given by
 \[
 \alpha^{\#} = e^{-u}\alpha_1\frac{\del}{\del x_1} + e^{-u}\alpha_2\left(\frac{\del}{\del x_2} + x_1 \frac{\del}{\del x_3}\right) +\alpha_3\frac{\del}{\del x_3}
 \]
 and so
 \[
 J\alpha^{\#} = e^{-u}\alpha_1\frac{\del}{\del y_1} + e^{-u}\alpha_2\left(\frac{\del}{\del y_2} + x_1 \frac{\del}{\del y_3}\right) +\alpha_3\frac{\del}{\del y_3}.
 \]
By definition we have
 \[
 T\alpha = -d\omega(J\alpha^\#, \cdot, \cdot),
 \]
 and so we only need to compute $d\omega$.  In fact, it clearly suffices to compute the components of $d\omega$ of the form $dy_i\wedge dx_j\wedge dx_k$, since the remaining components will not contribute to $T\alpha$. In the following computation we will write ``Irr." to denote the irrelevant components of $d\omega$-- namely, those not contained in the span of the $3$-forms $dy_i\wedge dx_j\wedge dx_k$.  We have
 \[
 d\omega = e^{u}du\wedge \omega_{T^{4}} + {\rm Re}(\sqrt{-1}d\theta \wedge \bar{\theta})
 \]
 The first term is easier.  We have
 \begin{equation}\label{eq: TaOmComp}
e^{u}du\wedge \omega_{T^{4}} = e^{u} \frac{\del u}{\del x_1} dy_2\wedge dx_1\wedge dx_2- e^{u} \frac{\del u}{\del x_2} dy_1\wedge dx_1\wedge dx_2 + \left(\text{ Irr.}\right).
 \end{equation}
 For the second term we have
 \begin{equation}\label{eq: expdtheta}
 \begin{aligned}
 \sqrt{-1}d\theta \wedge\bar{ \theta} &= -d\bar{z_1}\wedge dz_2 \wedge (d\bar{\zeta} -z_1d\bar{z_2}) \\
 &= -\sqrt{-1}d\bar{z_1} \wedge dz_2 \wedge d\bar{\zeta} +z_1 d\bar{z_1}\wedge (\sqrt{-1}dz_2\wedge d\bar{z_2})
 \end{aligned}
 \end{equation}.
 For the second term on the right hand side of~\eqref{eq: expdtheta} we have
 \[
 z_1 d\bar{z_1}\wedge (\sqrt{-1}dz_2\wedge d\bar{z_2}) = 2x_1dy_2\wedge dx_1\wedge dx_2 + \left( \text{ Irr. }\right)
 \]
 while for the first term, we get
 \[
  -\sqrt{-1}d\bar{z_1} \wedge dz_2 \wedge d\bar{\zeta} = -(dy_1\wedge dx_2\wedge dx_3+dy_2\wedge dx_1\wedge dx_3+dy_3\wedge dx_1\wedge dx_2) + \left( \text{ Irr. } \right).
  \]
  All together, this yields
  \begin{equation}\label{eq: TaThetaComp}
  \begin{aligned}
 {\rm Re}\left(  \sqrt{-1}d\theta \wedge\bar{ \theta}\right) &= 2x_1dy_2\wedge dx_1\wedge dx_2 -\left(dy_1\wedge dx_2\wedge dx_3+dy_2\wedge dx_1\wedge dx_3\right)\\
 &\quad -dy_3\wedge dx_1\wedge dx_2) + \left( \text{ Irr. } \right)\\
 &= x_1dy_2\wedge dx_1\wedge dx_2 -dy_1\wedge dx_2\wedge (dx_3-x_1dx_2)\\
 &\quad -dy_2\wedge dx_1\wedge (dx_3-x_1dx_2)\\
 &\quad -dy_3\wedge dx_1\wedge dx_2 + \left( \text{ Irr. } \right).
 \end{aligned}
 \end{equation}
 Combining~\eqref{eq: TaOmComp} and~\eqref{eq: TaThetaComp} we obtain
 \begin{equation}\label{eq: TaFinal}
 \begin{aligned}
 T\alpha &= \left(\alpha_1\frac{\del u}{\del x_2}-\alpha_2\frac{\del u}{\del x_1} \right) dx_1\wedge dx_2\\
 &+\alpha_3dx_1\wedge dx_2+e^{-u}\alpha_1 dx_2\wedge (dx_3-x_1dx_2) + e^{-u}\alpha_2dx_1\wedge (dx_3-x_1dx_2)
 \end{aligned}
 \end{equation}
Our goal is to analyze the space of $1$-forms $\alpha$ satisfying the infinitesimal deformation equations for $L$
\[
d^{\dagger}(|\Omega|_{g_u}\alpha) =0 \qquad d\alpha+T\alpha=0.
\]

Before beginning the analysis, we discuss the topology of $L$, which is a nontrivial $T^2$ bundle over $S^1$.  Namely, consider the map $\pi : L \rightarrow S^1$ given by $\pi(x_1,x_2,x_3) = x_2 \in S^1$, then the fiber of $L$ is given by $(x_1,x_3) \in \mathbb{R}^2$ with the equivalence relation
\[
(x_1,x_3) \sim (x_1+a,x_3+ax_2+b) \qquad (a,b) \in \mathbb{Z}^2
\]
This is evidently a $T^2$ bundle, where we identify
\[
\pi^{-1}(x_2) = \mathbb{R}^2/\left(\mathbb{Z}\tau_1(x_2)+ \mathbb{Z}\tau_2\right) \qquad \tau_1(x_2) = (1,x_2), \quad \tau_2= (1,0).
\]
 In particular, we see that the monodromy action $\pi_1(S^1)$ acts on $H_1(T^2)$ by
 \[
 \tau_1 \mapsto \tau_1+\tau_2, \quad \tau_2 \mapsto \tau_2.
 \]
 It follows from this calculation that $b_1(L)=2$. 

The next task is to expand the infinitesimal deformations equations in terms of the frame introduced in ~\eqref{eq: hodgeStarFrame}.   We have
 \[
 \star \alpha = \alpha_1dx_2\wedge (dx_3-x_1dx_2) - \alpha_ 2 dx_1\wedge (dx_3-x_1dx_2) + \alpha_3e^udx_1\wedge dx_2,
 \]
 and so, since $|\Omega|_{g_u} = e^{-u}$, the equation $d\star(|\Omega|_{g_u} \alpha) =0$ implies
 \begin{equation}\label{eq: coClosedDef}
 \frac{\del}{\del x_1}(e^{-u}\alpha_1) +\left( \frac{ \del}{\del x_2}+x_1\frac{\del }{\del x_3}\right)(e^{-u}\alpha_2) + \frac{\del }{\del x_3}\alpha_3 =0.
 \end{equation}

Next we examine the equation $d\alpha+T\alpha=0$.  Using the formula for $T\alpha$ in~\eqref{eq: TaFinal} we compute 
 \[
 \begin{aligned}
 d\alpha &= \left(\frac{\del \alpha_2}{\del x_1}  - \left(\frac{\del \alpha_1}{\del x_2} + x_1 \frac{\del \alpha_1}{\del x_3}\right)-\alpha_3\right)dx_1\wedge dx_2\\
 &\quad + \left(\frac{\del \alpha_3}{\del x_1} - \frac{\del \alpha_1}{\del x_3}\right) dx_1\wedge (dx_3-x_1dx_2)\\
 &\quad + \left( \left(\frac{\del \alpha_3}{\del x_2} + x_1 \frac{\del \alpha_3}{\del x_3}\right) - \frac{\del \alpha_2}{\del x_3}\right) dx_2\wedge (dx_3-x_1dx_2).
 \end{aligned}
 \]
  Thus, the equation $d\alpha +T\alpha =0$ becomes the following system of differential equations
  \begin{equation}\label{eq: da+Ta}
  \begin{aligned}
   \left(\frac{\del \alpha_2}{\del x_1}  - \left(\frac{\del \alpha_1}{\del x_2} + x_1 \frac{\del \alpha_1}{\del x_3}\right)-\alpha_3\right)&=-\left(\alpha_1\left(\frac{\del u}{\del x_2}+x_1\frac{\del u}{\del x_3}\right)-\alpha_2\frac{\del u}{\del x_1} + \alpha_3 \right) \\
    \left(\frac{\del \alpha_3}{\del x_1} - \frac{\del \alpha_1}{\del x_3}\right)&=-e^{-u}\alpha_2\\
 \left(\frac{\del \alpha_3}{\del x_2} + x_1 \frac{\del \alpha_3}{\del x_3}\right) - \frac{\del \alpha_2}{\del x_3} &=-e^{-u}\alpha_1. 
    \end{aligned}
    \end{equation}
   
    We can rewrite the first equation of~\eqref{eq: da+Ta} as 
    \begin{equation}\label{eq: 12eq}
    \begin{aligned}
    \frac{\del}{\del x_1} (e^{-u}\alpha_2) = \left(\frac{\del }{\del x_2}+x_1\frac{\del }{\del x_3}\right)(e^{-u}\alpha_1)
    \end{aligned}
    \end{equation}
   Let us  denote
    \[
    \del_1= \frac{\del}{\del x_1} \quad \del_2 =  \left(\frac{\del }{\del x_2}+x_1\frac{\del }{\del x_3}\right) \quad \del_3= \frac{\del}{\del x_3}
    \]
    Note that $ [\del_1, \del_2] = \del_3$ while all other Lie brackets are zero.  The trick is to decouple one term from the above system.  We differentiate~\eqref{eq: coClosedDef} in the $\del_1$ direction, and use the commutation relation to obtain
    \[
    \del_1^2(e^{-u}\alpha_1) + \del_2\del_1(e^{-u}\alpha_2) + \del_3(e^{-u}\alpha_2) + \del_3\del_1(\alpha_3)=0
    \]
    Applying \eqref{eq: 12eq} we obtain
    \[
    \del_1^2(e^{-u}\alpha_1) + \del_2\del_2(e^{-u}\alpha_1) + \del_3(e^{-u}\alpha_2) + \del_3\del_1(\alpha_3)=0 . 
    \]
    Finally, by the second equation in~\eqref{eq: da+Ta} we get
    \[
    \del_3\del_1(\alpha_3)= \del_3^2\alpha_1 -\del_3(e^{-u}\alpha_2).
    \]
    Thus, since $u=u(x_1,x_2)$ is independent of $x_3$ we obtain
    \[
    \del_1^2(e^{-u}\alpha_1) + \del_2^2(e^{-u}\alpha_1)  + e^u\del_3^2(e^{-u}\alpha_1)=0. 
    \]
    Thus, by the maximum principle we conclude that $e^{-u}\alpha_1$ is a constant.  On the other hand, a similar calculation, differentiating~\eqref{eq: coClosedDef} in the $\del_2$ direction, yields
    \[
         \del_1^2(e^{-u}\alpha_2) + \del_2^2(e^{-u}\alpha_2) + e^{u}\del_3^2(e^{-u}\alpha_2) = 2 \del_3(e^{-u}\alpha_1)=0
       \]
       since $e^{-u}\alpha_1$ is a constant.  Another application of the maximum principle shows that $e^{-u}\alpha_2$ is a constant.  Evaluating the second and third equations of~\eqref{eq: da+Ta} at the maximum of $\alpha_3$ shows that $e^{-u}\alpha_1= e^{-u}\alpha_2=0$.  Equation~\eqref{eq: coClosedDef} shows that $\del_3\alpha_3=0$, while the second and third equations of~\eqref{eq: da+Ta} show that $\del_1\alpha_3=0=\del_2\alpha_3$, and so $\alpha_3$ is a constant.  On the other hand, since $\alpha_1=\alpha_2=0,$ and $\alpha_3=const.$ is clearly a solution of the system  ~\eqref{eq: coClosedDef} and ~\eqref{eq: da+Ta} we see that the deformation space is indeed $1$-dimensional.  Since the special Lagrangians $L_t$ clearly integrate this deformation, we conclude that the deformation theory is unobstructed in this case.
       \end{proof}

      The proof of Lemma~\ref{lem: Iwasawa} is essentially identical, but simpler, so we will only sketch the proof.
      \begin{proof}[Proof of Lemma~\ref{lem: Iwasawa}]
Computing as above we find that the infinitesimal deformation equations are given by
 \begin{equation}\label{eq: IwaDef}
  \begin{aligned}
  & \left(\frac{\del \alpha_2}{\del x_1}  - \left(\frac{\del \alpha_1}{\del x_2} + x_1 \frac{\del \alpha_1}{\del x_3}\right)-\alpha_3\right)=-\left(\alpha_1\left(\frac{\del u}{\del x_2}+x_1\frac{\del u}{\del x_3}\right)-\alpha_2\frac{\del u}{\del x_1} + \alpha_3 \right) \\
    &\left(\frac{\del \alpha_3}{\del x_1} - \frac{\del \alpha_1}{\del x_3}\right)=-e^{-u}\alpha_2\\
& \left(\frac{\del \alpha_3}{\del x_2} + x_1 \frac{\del \alpha_3}{\del x_3}\right) - \frac{\del \alpha_2}{\del x_3} =e^{-u}\alpha_1,\\
 & \frac{\del}{\del x_1}(e^{-u}\alpha_1) +\left( \frac{ \del}{\del x_2}+x_1\frac{\del }{\del x_3}\right)(e^{-u}\alpha_2) + \frac{\del }{\del x_3}\alpha_3 =0
    \end{aligned}
    \end{equation}      
Note that the only difference between~\eqref{eq: IwaDef} and the deformation equations for $X$ considered above is a sign change on the right hand side of the third equation; cf~\eqref{eq: da+Ta}.  Now one may proceed as in the proof of Lemma~\ref{lem: Fu-Yau} except that the equations for $e^{-u}\alpha_1$ and $e^{-u}\alpha_2$ both decouple (making the case of the Iwasawa manifold comparatively simpler).
\end{proof}

\section{Background on Conifold Transitions}\label{sec: backConifold}
\subsection{Local Setup}
We start by establishing notation for the local geometry of a conifold transition. Let
\[
V_t = \left\{ \sum_{i=1}^4 z_i^2 = t \right\} \subset \mathbb{C}^4.
\]
Local coordinates near $z_4 \neq 0 $ are given by $(z_1,z_2,z_3)$. We will denote
\[
\| z \|^2 = |z_1|^2+|z_2|^2+|z_3|^2+|z_4|^2.
\]
The Candelas-de la Ossa metrics \cite{CandelasdlOssa} on $V_t$ is a family of K\"ahler Ricci-flat metrics that we now review. For $t=1$ the metric is given by the ansatz
\[
\omega_{co,1} = \ddb \psi(\| z \|^2)
\]
where $\psi(s)$ solves an appropriate ODE such that ${\rm Ric}(\omega_{co,1}) = 0$. Once $\omega_{co,1}$ is known, the metrics for all other $t \neq 0$ are given by
\be \label{eq:gcoscaling}
g_{co,t} = |t|^{2/3} S^* g_{co,1}
\ee
with $S: V_t \rightarrow V_1$ the scaling map $S(z) = t^{-1/2} z$. The choice of square root does not affect the discussion. For $t=0$, the ODE for a K\"ahler Ricci-flat metric on $V_0$ has the simple solution $\omega_{co,0} = \ddb r^2$, which is a cone metric of the form $g_{co,0} = dr^2 + r^2 g_L$, with
\[
r(z) = \| z \|^{2/3}.
\]
The metrics $r^{-2} g_{co,t}$ have uniform geometry in the following sense: around each point $\hat{z} \in V_t$, there exists a coordinate ball of uniform size such that the matrix representing $g_{co,t}$ in these coordinates satisfies the estimates
\be \label{eq:r-2gcot}
C^{-1} g_{euc} \leq r^{-2} g_{co,t} \leq C g_{euc}, \quad |\partial^k (r^{-2} g_{co,t})| \leq C_k
\ee
where the constants $C,C_k>0$ are uniform in $t$. (See e.g. \cite{Chuan}). 
\smallskip
\par The model holomorphic volume form on $V_t$ is defined by
\[
\Omega_{mod,t} = {d z_1 \wedge dz_2 \wedge dz_3 \over z_4} \quad {\rm on} \ z_4 \neq 0.
\]
It scales as $S^* \Omega_{mod,1} = t^{-1} \Omega_{mod,t}$ and the normalization on $g_{co,t}$ is such that
\[
| \Omega_{mod,t} |_{g_{co,t}} = 1.
\]
Next, we discuss the vanishing cycle of $V_t$, which is denoted by
\[
L_t = \bigg\{ \| z \|^2 = |t| \bigg\} \subset V_t.
\]
Topologically, the set $L_t$ can be identified with $S^3$, and we denote the embedding by $f_t: S^3 \rightarrow V_t$ with
\[
f_t(x_1,x_2,x_3,x_4) = t^{1/2}(x_1,x_2,x_3,x_4)
\]
where $S^3 = \{ \sum_{i=1}^4 x_i^2 = 1 \} \subset \mathbb{R}^4$. If the parameter $t=|t| e^{i \theta}$, then
\[
f_t^* \omega_{co,t} =0, \quad f_t^* ({\rm Im} \, e^{-i \theta} \Omega_{mod,t}) = 0,
\]
and so the 3-cycle $L_t$ is special Lagrangian with respect to $(\omega_{co,t},\Omega_{mod,t})$. Let $h_{t} = f_t^* g_{co,t}$ be the induced metric on $S^3$. By (\ref{eq:gcoscaling}), there is the simple scaling relation
\[
  h_{t}  = |t|^{2/3} h_{1}.
\]

\subsection{Global Setup}
We now consider a global conifold transition
\be
\label{top-change}
X \rightarrow \underline{X} \rightsquigarrow X_t,
\ee
as described in \cite{Friedman}. We start with $X$, a simply connected compact K\"ahler Calabi-Yau manifold of complex dimension 3. At the level of topology, a conifold transition is a surgery which replaces embedded 2-spheres with 3-spheres. At the level of complex geometry, we start by contracting $k$ disjoint $(-1,-1)$ curves $C_i \subset X$ to points $p_i$. This produces a singular space $\underline{X}$ which has singularities at $p_i$, and a neighborhood of each $p_i$ can be identified with a neighborhood of
\[
  V_0 = \{ \sum_{i=1}^4 z_i^2 =0 \} \subset \mathbb{C}^4.
\]
Then the compact complex manifolds $X_t$ are the smooth fibers of a holomorphic family $\mathcal{X} \rightarrow \Delta=\{ t \in \mathbb{C} : |t| <1 \}$ with central fiber $\underline{X}$.  Near the singular points the smoothing is locally biholomorphic to the model smoothing $V_t= \{ \sum z_i^2 =t \}$ \cite{KSchles}. It was proved by Friedman \cite{Friedman} and Tian \cite{Tian92} that the existence of this smoothing $X_t$ is implied by the existence of constants $\lambda_i \neq 0$ such that 
\be \label{Friedmans-cond}
\sum_{i=1}^k \lambda_i [C_i] = 0 
\ee
in $H_2(X,\mathbb{R})$. Note that $X_t$ may be non-K\"ahler in general.
\begin{ex}
A simple example of a conifold transition is given in \cite{CGH,GMS}. Consider the singular quintic $\underline{X} \subset \mathbb{P}^4$ given by
\be \label{eq:singquintic}
x_3 g(x) + x_4 h(x) = 0
\ee
for generic homogeneous degree 4 polynomials $g$, $h$. The singularities of $\underline{X}$ are locally modelled by $V_0=\{ \sum_i z_i^2 = 0 \} \subset \mathbb{C}^4$. There are two ways to resolve these singularities. One way is to perform a small resolution by blow-up which produces a smooth Calabi-Yau $X$. The other is to deform $\underline{X}$ to a smooth quintic $X_t$ by introducing e.g. $t \sum x_i^5$ on the right-hand side of (\ref{eq:singquintic}). This is an example of a conifold transition $X \rightarrow \underline{X} \rightsquigarrow X_t$ where both sides $X,X_t$ are K\"ahler.
\end{ex}

\begin{ex}
An example of a transition from K\"ahler to non-K\"ahler is given in \cite{Friedman}. For this example, we take the initial threefold to be a smooth quintic $X \subset \mathbb{P}^4$. The second Betti number is ${\rm rk} H_2(X,\mathbb{R})=1$, and by Friedman’s criteria (\ref{Friedmans-cond}) a choice of $k \geq 2$ disjoint $(-1,-1)$ curves leads to a contraction and smoothing $X \rightarrow \underline{X} \rightsquigarrow X_t$. Friedman showed that $H_2(X_t,\mathbb{Z})=\mathbb{Z} / d \mathbb{Z}$ for an integer $d$ depending on the choice of curves, and so $X_t$ cannot admit a K\"ahler metric.
\end{ex}

To study the geometry of $X \rightarrow \underline{X} \rightsquigarrow X_t$, Fu-Li-Yau \cite{FLY} proposed to use balanced metrics and proved their existence through conifold transitions. In the case when $X_t$ admits a K\"ahler metric, the geometry of a transition with respect to K\"ahler Ricci-flat metrics was studied in \cite{RongZhang,Song}. Fu-Li-Yau's uniform approach works for all transitions, and this only requires the condition (\ref{Friedmans-cond}) on the initial configuration of the holomorphic curves; there is no requirement to distinguish whether $X_t$ can admit a K\"ahler structure. 

\subsubsection{Fu-Li-Yau metric} Let $r: \mathcal{X} \rightarrow (0,\infty)$ be a function which agrees with $r = \| z \|^{2/3}$ near the singular points $p_i$ and $r^{-1}(0) = \{ p_1,\dots,p_k \}$. We assume that the set $X_t \cap \{ r < 1 \}$ contains $k$ components which we each biholomorphically identify with $V_t \cap \{ r < 1 \}$. Each $i$th component of $X_t \cap \{ r < 1 \}$ contains a vanishing cycle $L_{i,t}=\{ \| z \|^2 = |t| \}$.

\par Let $\omega_t$ denote the Fu-Li-Yau balanced metric on $X_t$ satisfying $d \omega_t^2 = 0$ and let $g_t$ denote the associated Riemannian metric. We now recall the properties of $g_t$ that we will need. First, on compact sets $K \subset \mathcal{X}$ disjoint from the singularities, the geometry $(X_t,g_t)$ is uniformly bounded in $t \in \Delta$ (but dependent on $K$) in all $C^k$ norms with respect to a reference metric on $\mathcal{X}$. Next, near the singularities, the metrics $g_t$ are locally modelled on the Candelas-de la Ossa metrics $g_{co,t}$. To be precise, given a singular point $p_i$, there is $\lambda >0$ such that the following estimate (stated as Lemma 6.6 in \cite{CPY}, which can be derived from the estimates in \cite{FLY} and \cite{Chuan}) holds on the component of $\mathcal{X} \cap \{ r < 1 \}$ containing $p_i$:
\be \label{eq:cot2check}
| \nabla_{g_{co,t}}^k (g_t - \lambda g_{co,t})|_{g_{co,t}} \leq C_k |t|^{2/3} r^{-k}.
\ee
Here $C_k>0$ is independent of $t$, and $g_{co,t}$ is the Ricci-flat metric constructed by Candelas-de la Ossa. Finally, we recall that $g_t$ converges uniformly on compact sets as $t \rightarrow 0$ to a metric $g_0$ on $\underline{X}_{reg}$ which satisfies $d \omega_0^2=0$ and agrees with a multiple of $g_{co,0}$ in a neighborhood of each singularity.

\subsubsection{Holomorphic volume form} The complex manifolds $X_t$ emerging from a conifold transition have trivial canonical bundle \cite{Friedman}. We will need a local description of the holomorphic volume forms $\Omega_t$ on $X_t$, and we give here an explicit construction of the trivializing section of $K_{X_t}$. Let $\Omega_0$ be the holomorphic volume form on $\underline{X}$ obtained by restricting the holomorphic volume form on the K\"ahler Calabi-Yau $X$. Near a singular point $p$ of $\underline{X}$, we identify a neighborhood of $p$ with $V_0 \cap \{ r \leq 1 \}$, and we can write
\[
\Omega_0 = h(z) \Omega_{mod,0}
\]
for a non-vanishing holomorphic function $h(z)$ defined on $V_0 \subset \mathbb{C}^4 \cap \{ r \leq 1 \}$. The singularity at the origin comes from contracting a holomorphic curve $C_i \subset X$, hence $h(z)$ assumes a value $h(0) \in \mathbb{C}^*$. Furthermore, since $V_0$ is a normal variety, $h(z)$ can be defined in an open set $U \subset \mathbb{C}^4$ containing the origin; see, for example \cite[Chapter II]{Demailly}.

\par We now smoothly extend $\Omega_0$ to $\mathcal{X}$. We assume that there is only one singularity $p \in \underline{X}$ for simplicity of notation. Let $\delta>0$ be such that $h$ defines a holomorphic function on $\{ r < 2 \delta \} \subset \mathbb{C}^4$, and let $\zeta$ denote a cutoff function on $\mathbb{C}^4$ such that $\zeta \equiv 1$ on $\{ r \leq \delta \}$ and $\zeta \equiv 0$ on $\{ r > 2 \delta \}$.  The family $X_t \backslash \{ r \leq \delta \}$ is smooth, and we let
\[
\Phi_t: X_0 \backslash \{r \leq \delta \} \rightarrow X_t \backslash \{r \leq \delta \}
\]
be a smoothly varying family of diffeomorphisms with $\Phi_0 = id$. We introduce the following 3-form on $X_t$:
\[
\Psi_t = \zeta h(z) \Omega_{mod,t} + (\Phi_t^{-1})^* \big((1-\zeta)\Omega_0 \big)
\]
and to obtain a smooth non-vanishing section of $K_{X_t}$, we take the $(3,0)$ part. The section $\Psi_t^{3,0}$ is non-vanishing for small $t$, since this holds on $X_t \cap \{ r \leq \delta \}$ because $\Psi_t^{3,0}=h(z) \Omega_{mod,t}$ there, and on $X_t \backslash \{ r \leq \delta \}$ the forms are varying smoothly in $t$ and $\Psi_0^{3,0}= \Omega_0$ is non-vanishing at $t=0$. We let $\sigma_t$ denote the representative of the section $\Psi_t^{3,0} \in \Gamma(X_t,K_{X_t})$ in a local trivialization, i.e.
\[
\Psi_t^{3,0} = \sigma_t \, dz_1 \wedge dz_2 \wedge dz_3.
\]
The next step is to correct the smooth section $\sigma_t$ to a holomorphic section. For this, first we note that $ \sigma_t^{-1} \bar{\partial} \sigma_t$ defines a $\bar{\partial}$-closed $(0,1)$ form on $X_t$. Next, by Lemma 8.2 in \cite{Friedman}, we have $H^1(X_t,\mathcal{O}_{X_t})=0$ since $H^1(X,\mathcal{O}_{X})=0$ because $X$ is simply connected. Since the Dolbeault cohomology vanishes $H_{\bar{\partial}}^{0,1}(X_t)=0$, we can solve the $\bar{\partial}$-equation
\be \label{eq:u-defn}
\bar{\partial} u_t = - {1 \over \sigma_t} \bar{\partial} \sigma_t, \quad \int_{X_t} u_t \, \Psi_t^{3,0} \wedge \overline{\Psi_t^{3,0}} =0 ,
\ee
for a smooth function $u_t \in C^\infty(X_t)$. The corrected section
\[
\Omega_t = e^{u_t} \sigma_t \, dz_1 \wedge dz_2 \wedge dz_3
\]
satisfies $\bar{\partial} \Omega_t = 0$ and equips $X_t$ with a holomorphic volume form.

\subsection{Estimate of the holomorphic volume form}
For later use, we will need to compare the global section $\Omega_t$ on $X_t$ to the local model $\Omega_{mod,t}$ on $V_t$. We will use the convention where $C$ denotes a generic constant which may change line by line. In this section, we will use weighted H\"older norms on $X_t$; for a function $u: X_t \rightarrow \mathbb{R}$, we denote:
\bea
& \ & \| u \|_{C^\gamma_{-\beta}} = |r^\beta u|_{L^\infty(X_t)} + [u]_{C^\gamma_{-\beta-\gamma}} \nonumber\\
& \ & [u]_{C^\gamma_{-\beta-\gamma}} = \sup_{x \neq y} \bigg[\min(r(x),r(y))^{\beta+\gamma} {|u(x)-u(y)| \over d_{g_t}(x,y)^\gamma} \bigg] \nonumber\\
& \ & \| u \|_{C^{2,\gamma}_{-\beta}} = |r^\beta u|_{L^\infty(X_t)} + |r^{\beta+1} \nabla u|_{L^\infty(X_t,g_t)} + |r^{\beta+2} \nabla^2 u|_{L^\infty(X_t,g_t)} + [\nabla^2 u]_{C^\gamma_{-\beta-2-\gamma}} \nonumber\\
& \ & [\nabla^2 u]_{C^\gamma_{-\beta-2-\gamma}} = \sup_{x \neq y} \bigg[ \min(r(x),r(y))^{\beta+2+\gamma} {|\nabla^2 u(x)-\nabla^2 u(y)| \over d_{g_t}(x,y)^\gamma} \bigg].\nonumber
\eea
When writing $|\nabla^2 u(x)- \nabla^2 u(y)|$, we use parallel transport along geodesics with respect to $g_t$ to compare the values $\nabla^2 u$ at different points. Covariant derivatives are with respect to $g_t$.
\smallskip
\par We now state the main estimate for comparing $\Omega_t$ to the local model $\Omega_{mod,t}$.

\begin{lem} \label{lem:Omega-est} There exists $\tau \in \mathbb{C}^*$ and $C_k>1$ near each singular point such that
  \be \label{eq:Omegamod2Omega}
\sup_{\{\| z\|^2= t\}} | \nabla^k_{g_{co,t}} ( \Omega_t - \tau \Omega_{mod,t} ) |_{g_{co,t}} \leq C_k |t|^{1/2} |t|^{-k/3},
\ee
for all $t$ small enough.
\end{lem}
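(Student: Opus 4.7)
The plan is to first reduce the estimate to an analytic statement about a single holomorphic function, and then exploit the rescaling symmetry of the Candelas-de la Ossa model. In the region $\{r \le \delta\}$ where the cutoff $\zeta \equiv 1$, the section $\Psi_t$ equals $h(z)\Omega_{mod,t}$, which is manifestly of pure type $(3,0)$ with holomorphic coefficient (in the chart $z_4 \neq 0$, the coefficient is $h(z)/z_4$). Hence $\bar\partial \sigma_t = 0$ there, so the correction $u_t$ from (\ref{eq:u-defn}) satisfies $\bar\partial u_t = 0$ and is \emph{holomorphic} on $V_t \cap \{r \le \delta\}$. Writing $\Omega_t = F_t \cdot \Omega_{mod,t}$ with $F_t := e^{u_t} h$ holomorphic, and using $|\Omega_{mod,t}|_{g_{co,t}} = 1$, the lemma reduces to the pointwise estimate $|\nabla^k_{g_{co,t}}(F_t - \tau)|_{g_{co,t}} \le C_k |t|^{1/2}|t|^{-k/3}$ on $L_t$ for an appropriate $\tau$.

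Next I would establish uniform bounds on $u_t$ and its smooth dependence on $t$. The right-hand side $-\sigma_t^{-1} \bar\partial \sigma_t$ is supported in the fixed transition annulus $\{\delta < r < 2\delta\}$ where $\zeta'$ is nonzero, and in this region $\sigma_t$ varies smoothly with $t \in \Delta$. Combining Friedman's vanishing $H^1(X_t, \mathcal{O}_{X_t}) = 0$ with the normalization condition in (\ref{eq:u-defn}), the function $u_t$ is uniquely determined and depends smoothly on $t$, converging in $C^k$ to $u_0$ on compact subsets of $\underline{X}_{reg}$ at rate $O(|t|)$. The maximum modulus principle applied to $u_t$ holomorphic on $V_t \cap \{r \le \delta\}$ then propagates these bounds inward uniformly. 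Because $V_0$ is normal, the limit $u_0$ extends holomorphically through the singular point $p$, and I set $\tau := e^{u_0(p)} h(0) \in \mathbb{C}^*$, which is nonzero since $h(0) \neq 0$.

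To extract the quantitative $|t|^{1/2}$ decay I rescale via $\phi_t : V_1 \to V_t$, $\phi_t(w) = t^{1/2} w$, which satisfies $\phi_t^* g_{co,t} = |t|^{2/3} g_{co,1}$ by (\ref{eq:gcoscaling}) and sends $L_1$ onto $L_t$. The pullback $\phi_t^* F_t(w) = e^{u_t(t^{1/2}w)} h(t^{1/2}w)$ is holomorphic on a domain in $V_1$ that exhausts $V_1$ as $t \to 0$. Taylor expansion of $h$ at $0 \in \mathbb{C}^4$ gives $h(t^{1/2}w) - h(0) = O(|t|^{1/2})$ with each $w$-derivative producing an extra factor of $t^{1/2}$. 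Extending $u_t$ holomorphically to a fixed ball in $\mathbb{C}^4$ around the singular point gives $e^{u_t(t^{1/2}w)} - e^{u_0(p)} = O(|t|^{1/2})$ with the same chain-rule scaling of derivatives. Thus $|\nabla^k_{g_{co,1}}(\phi_t^*F_t - \tau)|_{g_{co,1}} \le C_k |t|^{1/2}$ uniformly on $L_1$, and the identity $|\nabla^k|_{|t|^{2/3} g_{co,1}} = |t|^{-k/3}|\nabla^k|_{g_{co,1}}$ delivers the claimed $|t|^{1/2}|t|^{-k/3}$ bound after pullback to $L_t$.

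The main technical obstacle is the holomorphic extension of the family $u_t$ across the singular fiber, with quantitative control $\tilde u_t(0) - u_0(p) = O(|t|)$. The extension of $u_t$ from $V_t \subset \mathbb{C}^4$ is non-canonical since different extensions differ by multiples of $(f - t)$, but at the origin this discrepancy is exactly $-c(t) \cdot t = O(|t|)$, matching the tolerance needed by our target decay. Making this precise requires either Cartan extension on a Stein neighborhood of $p$ applied uniformly to the smooth family $\{V_t\}_{t \in \Delta}$, or alternatively a direct argument on the scaled model $V_1$ showing that subsequential Montel limits of $\phi_t^* F_t$ are forced to be the constant $\tau$ by boundary considerations — the latter requiring a careful analysis of how the boundary data on $\{r = \delta\}$ scales to infinity under $\phi_t^{-1}$.
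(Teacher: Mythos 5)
Your reduction of the lemma to the scalar estimate on $F_t=e^{u_t}h$ (using that $\Omega_{mod,t}$ is $g_{co,t}$-parallel of unit norm), your Taylor expansion of $h$ at the origin, and your rescaling by $S^{-1}(w)=t^{1/2}w$ to convert $L^\infty$ bounds into the $|t|^{-k/3}$ derivative bounds are all exactly the paper's strategy. The gap is in the middle: you never actually establish quantitative control of $u_t$ \emph{at the vanishing cycle}, and this is the entire analytic content of the lemma. You assert that $H^1(X_t,\mathcal{O}_{X_t})=0$ plus the normalization in (\ref{eq:u-defn}) forces $u_t$ to converge to $u_0$ at rate $O(|t|)$ in $C^k$ on compact subsets of $\underline{X}_{reg}$. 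Cohomological vanishing gives existence and uniqueness of $u_t$ for each fixed $t$, but it gives no uniformity as $t\to 0$: the solution operator acts on a degenerating family of manifolds, and bounding it uniformly is precisely the hard step. Moreover, even granting convergence on compact subsets away from the node, the set $L_t=\{\|z\|^2=|t|\}$ leaves every such compact subset, so that convergence does not reach the region where the estimate is needed. Your proposed bridge --- the maximum modulus principle on $V_t\cap\{r\le\delta\}$ --- only bounds $\sup_{V_t\cap\{r\le\delta\}}|u_t-c|$ by the boundary supremum on $\{r=\delta\}$, which is a fixed $O(\delta)$ quantity, not something decaying like $|t|^{1/2}$; to get decay toward the node you need the holomorphic extension across the singular point together with a Cauchy/Schwarz-lemma estimate, which is exactly the step you flag as unresolved in your final paragraph.

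The paper closes this gap differently and more directly: it proves a uniform-in-$t$ weighted Schauder estimate $\| u \|_{C^{2,\gamma}_{-\beta}} \leq C \| \Delta_{g_t} u \|_{C^{\gamma}_{-2-\beta}}$ for normalized $u$ (Lemma \ref{lem:invertLaplace}, via a three-regime blow-up argument), applies it to $\Delta_{g_t}u_t=\psi_t$ with $\|\psi_t\|_{C^\gamma_{-2-\beta}}\le C|t|$ (since $\psi_t$ is supported in the fixed transition annulus and vanishes at $t=0$), and deduces the \emph{global} bound $\sup_{X_t}|u_t|\le C|t|^{2/3}$ of (\ref{eq:smallu}) --- valid in particular on $L_t$, with no extension across the singularity required. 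This also shows $u_t\to 0$ uniformly, so the correct constant is simply $\tau=h(0)$; your $\tau=e^{u_0(p)}h(0)$ is not wrong, but the fact that you carry the factor $e^{u_0(p)}$ is a symptom of not having the uniform estimate in hand. To repair your argument you would need either to prove the weighted estimate (\ref{eq:invertLaplace}) (or an equivalent uniform bound on the $\bar\partial$-solution operator with weights adapted to the cone), or to carry out rigorously the Cartan/Stein extension of the family $u_t$ to a fixed neighborhood of the origin in $\mathbb{C}^4$ with uniform sup bounds --- neither of which is done in the proposal.
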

\begin{proof} Let $p$ be a singular point of $\mathcal{X}$. In the neighborhood $\{ r < \delta \}$ containing $p$, the holomorphic volume form appears on $V_t$ as
  \[
\Omega_t = e^{u_t} h(z) \Omega_{mod,t}.
  \]
We will take $\tau = h(0)$. Thus
  \[
|\nabla^k_{g_{co,t}} (\Omega_t - \tau \Omega_{mod,t})|_{g_{co,t}} = |\nabla^k_{g_{co,t}}  (e^{u_t} h - h(0))|_{g_{co,t}}
  \]
where we used that $\Omega_{mod,t}$ is parallel with respect to $g_{co,t}$ and has constant norm thanks to the Calabi-Yau condition.  We will estimate $e^{u_t}h(z)-h(0)$. We start with $k=0$.
  \be \label{eq:Omega-est0}
|e^{u_t} h(z) - h(0)| \leq e^{u_t} |h(z)-h(0)| + |h(0)| |e^{u_t} - 1|.
  \ee
  We claim that there exists $C>1$ independent of $t$ such that
  \be \label{eq:smallu}
\sup_{X_t} |u_t| \leq C |t|^{2/3}.
\ee
This follows from the following weighted estimate: for any $\beta\in (0,2)$ and $\gamma \in (0,1)$, there exists $C>1$ uniform in $t$ such that
\be \label{eq:invertLaplace}
\| u \|_{C^{2,\gamma}_{-\beta}} \leq C \| \Delta_{g_t} u \|_{C^\gamma_{-2-\beta}}, 
\ee
for all  $u \in C^{2,\gamma}(X_t)$ satisfying
\[
\int_{X_t} u \, \Psi_t^{3,0} \wedge \overline{\Psi_t^{3,0}} = 0.
\]
 We are using the notation $\Delta_g u := g^{i \bar{j}} \partial_i \partial_{\bar{j}} u$ for the complex Laplacian, which is not the Laplacian of the Levi-Civita connection for non-K\"ahler $g$. The proof of estimate (\ref{eq:invertLaplace}) is a standard blow-up argument, and we will provide the proof in Lemma \ref{lem:invertLaplace} below. We now assume (\ref{eq:invertLaplace}) and continue the proof of (\ref{eq:smallu}) and Lemma \ref{lem:Omega-est}.
\smallskip
\par Our correction function $u_t$ defined in (\ref{eq:u-defn}) satisfies
\[
\Delta_{g_t} u_t = - g_t^{j \bar{k}} \partial_j (\sigma_t^{-1} \partial_{\bar{k}} \sigma_t) := \psi_t.
\]
By construction of $\sigma_t$, this right-hand side $\psi_t$ vanishes identically in a neighborhood of the singular points. On $X_t \backslash \{ r < \delta \}$, the geometry $g_t$ is bounded and $\bar{\partial} \sigma_t$ is zero everywhere at $t=0$. Therefore
\[
\| \psi_t \|_{C^\gamma_{-2-\beta}} \leq C |t|.
\]
The weighted estimate (\ref{eq:invertLaplace}) implies $r^{\beta} |u_t| \leq C |t|$, which proves (\ref{eq:smallu}) (taking $\beta=1$ to be concrete) since $\| z \|^2 \geq |t|$.
\smallskip
\par Using estimate (\ref{eq:smallu}) and $|h(z)-h(0)| \leq C \|z\|$, we conclude that for all $t$ small enough we can estimate
  \[
|e^{u_t}h(z)  - h(0) | \leq C \|z\| + C |t|^{2/3}
  \]
  on $\{ r < \delta \}$. It follows that for $t$ small enough, (\ref{eq:Omega-est0}) becomes
\[
\sup_{ \{ \| z \|^2 \leq 2 |t| \}} |\Omega_t - h(0) \Omega_{mod,t}|_{g_{co,t}}  \leq C |t|^{1/2}.
\]
We take $\tau = h(0)$ to obtain the estimate in the lemma with $k=0$.
  \smallskip
\par For higher order estimates, fix a point $\hat{z} \in V_t$ such that $\| \hat{z} \|^2 = |t|$. Suppose $z_4 \neq
0$. Let $\hat{x} \in V_1$ be given by $\hat{x}=S(\hat{z})$ where $S(z)=t^{-1/2}z$. Since $\hat{x}_4 \neq 0$, we may take coordinates
$(x_1,x_2,x_3)$ on $V_1 = \{ \sum_{i=1}^4 x_i^2 =1\}$. Denote $\hat{p} = (\hat{x}_1,\hat{x}_2,\hat{x}_3) \in \mathbb{C}^3$.
\smallskip
\par Let $h_t$ be the holomorphic function defined on $B_1(\hat{p}) \subset \mathbb{C}^3$ given by $h_t(x) = (e^{u_t} h)(t^{1/2}x)$ (recall $\bar{\partial} u_t = 0$ near the singularities).  The local estimates for holomorphic functions
\[
\sup_{B_{1/2}(\hat{p})}  |h_t-h(0)|_{C^k(g_{euc})} \leq C_k  \sup_{B_1(\hat{p})}| h_t - h(0) |_{L^\infty} \leq C_k |t|^{1/2},
\]
can be converted on the compact geometry $(V_1,g_{co,1}) \cap \{ \| x \| = 1 \}$ to the estimate
\[
|\nabla^k_{g_{co,1}} h_t|_{g_{co,1}}(\hat{p}) \leq C_k  |t|^{1/2}, \quad k \geq 1.
\]
Pulling back this estimate to $V_t$ via $S^*$ and using (\ref{eq:gcoscaling}) gives
\[
\sup_{\{ \| z \|^2 = |t| \}}  |\nabla^k_{g_{co,t}} e^{u_t} h |_{g_{co,t}} \leq C_k  |t|^{1/2} |t|^{-k/3}, \quad k \geq 1,
\]
which implies the higher order estimates stated in the lemma. 
\end{proof}

It remains to prove the uniform estimate for inverting the Laplacian.

\begin{lem} \label{lem:invertLaplace}
For any $\beta\in (0,2)$ and $\gamma \in (0,1)$, there exists $C>1$ uniform in $t$ such that
\[
\| u \|_{C^{2,\gamma}_{-\beta}} \leq C \| \Delta_{g_t} u \|_{C^{0,\gamma}_{-2-\beta}}, 
\]
for all  $u \in C^{2,\gamma}(X_t)$ satisfying
\[
\int_{X_t} u \, \Psi_t^{3,0} \wedge \overline{\Psi_t^{3,0}} = 0.
\]
  \end{lem}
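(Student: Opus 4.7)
The plan is a blow-up/contradiction argument that exploits the three natural limit geometries of $(X_t, g_t)$ as $t \to 0$ and as one rescales toward the conifold singularities: the nodal space $(\underline{X}, g_0)$, the Calabi-Yau cone $(V_0, g_{co,0})$, and the Candelas-de la Ossa smoothing $(V_1, g_{co,1})$. The integral condition removes the constant mode, and the weight $\beta \in (0,2)$ avoids the indicial roots at the conifold singularity, so each candidate limit must vanish.

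First I would reduce the statement to the $C^0$ estimate $\|u\|_{C^0_{-\beta}} \leq C\|\Delta_{g_t} u\|_{C^{0,\gamma}_{-2-\beta}}$. By (\ref{eq:r-2gcot}) together with (\ref{eq:cot2check}), the rescaled metric $r(x)^{-2} g_t$ has uniformly bounded geometry on a definite-size ball at each $x \in X_t$. A rescaled interior Schauder estimate for the elliptic operator $\Delta_{g_t}$ on such a ball then yields
\[
\|u\|_{C^{2,\gamma}_{-\beta}} \leq C\left(\|u\|_{C^0_{-\beta}} + \|\Delta_{g_t} u\|_{C^{0,\gamma}_{-2-\beta}}\right),
\]
with $C$ independent of $t$, so the $C^0$ estimate suffices.

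Assume the $C^0$ estimate fails. Then there exist sequences $t_n \to t_\infty \in \overline{\Delta}$ and $u_n$ satisfying the integral constraint with $\|u_n\|_{C^0_{-\beta}} = 1$ and $\|\Delta_{g_{t_n}} u_n\|_{C^{0,\gamma}_{-2-\beta}} \to 0$. Pick $x_n \in X_{t_n}$ with $r(x_n)^\beta |u_n(x_n)| \geq 1/2$ and set $r_n = r(x_n)$. Passing to subsequences, $(r_n, r_n/|t_n|^{1/3})$ falls into one of three regimes: (i) $r_n$ bounded below, with $(X_{t_n}, g_{t_n}) \to (\underline{X}_{reg}, g_0)$ smoothly on compact subsets and $u_n \to u_\infty$; (ii) $r_n \to 0$ but $r_n/|t_n|^{1/3} \to \infty$, with the rescalings $r_n^{-2} g_{t_n}$ and $r_n^\beta u_n$ converging to a harmonic function on $(V_0, g_{co,0})$ based at a unit-radius point; (iii) $r_n/|t_n|^{1/3}$ bounded, with $|t_n|^{-2/3} g_{t_n}$ and $|t_n|^{\beta/3} u_n$ converging to a harmonic function on $(V_1, g_{co,1})$. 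In each regime the step-one $C^{2,\gamma}_{-\beta}$ bound and Arzel\`a-Ascoli produce a nontrivial limit $u_\infty$ with $\Delta_{g_\infty} u_\infty = 0$, $|u_\infty| \leq r^{-\beta}$, and $|u_\infty| \geq 1/2$ at the basepoint.

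A Liouville argument kills each candidate limit. In (iii), $V_1$ is complete Calabi-Yau asymptotic to $V_0$, so a bounded harmonic function decaying at infinity vanishes. In (ii), separation of variables on the cone combined with the indicial equation $\alpha(\alpha+4) = \lambda_k$ for link Laplace eigenvalues $\lambda_k \geq 0$ (real dimension six) shows every indicial root lies in $(-\infty, -4] \cup [0, \infty)$; hence $|u_\infty| \leq r^{-\beta}$ with $\beta \in (0,2)$ kills the growing modes at infinity and the decaying modes at the vertex, forcing $u_\infty \equiv 0$. In (i), $|u_\infty| \leq r^{-\beta}$ with $\beta < 2 < 3$ places $u_\infty \in L^2_{loc}$ near each node; a capacity argument extends $u_\infty$ harmonically across the nodes on compact $\underline{X}$, the strong maximum principle for $g_0^{i\bar{j}} \partial_i \partial_{\bar{j}}$ makes $u_\infty$ constant, and the limiting integral condition (inherited from $\Psi_{t_n}^{3,0} \wedge \overline{\Psi_{t_n}^{3,0}} \to \Omega_0 \wedge \overline{\Omega_0}$) forces $u_\infty \equiv 0$. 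Each case contradicts $|u_\infty| \geq 1/2$ at the basepoint. The chief obstacle is the cone Liouville step---confirming the indicial gap $(-4,0)$ for the Candelas-de la Ossa Laplacian on $(V_0, g_{co,0})$---together with the secondary technical point that $\Delta_{g_t}$ is not the Hodge Laplacian in the non-K\"ahler setting, though it remains a real second-order elliptic operator with positive-definite principal symbol, so interior Schauder, the strong maximum principle, and blow-up convergence of operators all go through.
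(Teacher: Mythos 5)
Your proposal follows essentially the same route as the paper: a uniform weighted Schauder estimate reduces the claim to the $C^0_{-\beta}$ bound, and a blow-up contradiction argument is run against the same three limit geometries $(\underline{X},g_0)$, $(V_0,g_{co,0})$, and the smoothed cone, with the weight gap $\beta\in(0,2)\subset(0,n-2)$ and the integral normalization killing each limit. The only divergence is in the compact-limit case, where you invoke a capacity/removable-singularity argument plus the strong maximum principle, while the paper instead integrates by parts against a cutoff using the balanced condition $d\omega_0^2=0$ to get $\bar\partial u_0\equiv 0$ and then applies Hartogs on the small resolution --- the paper's route is worth noting because $\Delta_{g_0}=g_0^{i\bar j}\partial_i\partial_{\bar j}$ is not in divergence form for a general Hermitian metric, so the balancedness is what makes the energy argument legitimate.
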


  \begin{proof} This type of blow-up argument is now standard, e.g. \cite{Spotti, YLi, CPY}. We give the proof for the sake of completeness. The first step is to show the uniform in $t$ Schauder estimate
 \[
\| u \|_{C^{2,\gamma}_{-\beta}} \leq C ( \| r^\beta u \|_{L^\infty} + \| \Delta_{g_t} u \|_{C^{0,\gamma}_{-2-\beta}}), \quad u \in C^{2,\gamma}(X_t).
\]
This estimate holds on $X_t \cap \{ r \geq 1 \}$ by the standard Schauder estimates since the geometry of $g_t$ is uniformly bounded there for $0 \leq |t| \leq 1$. On $X_t \cap \{ r < 1 \}$, which is identified with a subset of $V_t= \{ \sum_{i=1}^4 z_i^2=t \}$, the metric $r^{-2} g_{co,t}$ has bounded geometry (\ref{eq:r-2gcot}). The Schauder estimates in local coordinates then give the weighted estimate with respect to the metric $g_{co,t}$. Then estimate (\ref{eq:cot2check}) can be used to go from norms with respect to $g_{co,t}$ to $g_t$ once $t$ is small enough. 
\smallskip
\par To prove the estimate without the $\| r^\beta u \|$ term, suppose by contradiction that there exists a sequence $u_i \in C^{2,\gamma}(X_{t_i})$ with $t_i \rightarrow 0$ and
\[
1 \geq M_i \| \Delta_{g_{t_i}} u_i \|_{C^{0,\gamma}_{-2-\beta}}, \quad \| u_i \|_{C^{2,\gamma}_{-\beta}}=1, \quad \int_{X_{t_i}} u_i \, \Psi_{t_i}^{3,0} \wedge \overline{\Psi_{t_i}^{3,0}}=0
\]
for constants $M_i \rightarrow \infty$.  Let $z_i \in X_{t_i}$ be a point where $r^\beta u$ attains its supremum on $X_{t_i}$. The uniform Schauder estimate implies
\[
{1 \over C} \leq r(z_i)^\beta |u_i(z_i)|.
\]
We start with the case when $\liminf r(z_i) >0$. After taking a subsequence, we have $z_i \rightarrow z_0 \in \underline{X}$ with $r(z_0)>0$. By the construction in \cite{FLY}, the metrics $g_{t_i}$ converge as $t_i \rightarrow 0$ to a metric $g_0$ on $\underline{X}$ which is balanced and agrees with $g_{co,0}$ near the singularities, and the convergence is uniform on compact sets disjoint from the singularities (see also \S 2, \S 6.1  in \cite{CPY} for further details). Also, we note that $\Psi_t^{3,0}$ converges back to $\Omega_0$ as $t \rightarrow 0$ locally uniformly. 
\smallskip
\par After taking a limit on compact sets disjoint from the nodes, we obtain a function $u_0 \in C^{2,\gamma}_{-\beta}(\underline{X})$ on the singular space satisfying
\be \label{eq:integral-limit}
\Delta_{g_0} u_0 = 0, \quad \int_{\underline{X}} u_0 \, \Omega_0 \wedge \overline{\Omega_0} =0. 
\ee
Here is justification for the integral. For fixed $0<\delta \ll 1$, uniform convergence implies
\bea
\int_{\{ r \geq \delta \} \cap \underline{X}} u_0 \, \Omega_0 \wedge \overline{\Omega_0} &=& \lim_{t \rightarrow 0} \int_{\{ r \geq \delta \} \cap X_t} u_t \, \Psi_t^{3,0} \wedge \overline{\Psi_t^{3,0}} \nonumber\\
&=& - \lim_{t \rightarrow 0} \int_{\{ r < \delta \} \cap X_t} u_t \, \Psi_t^{3,0} \wedge \overline{\Psi_t^{3,0}}. \nonumber
\eea
Since $\Psi_t^{3,0}=h(z) \Omega_{mod,t}$ on $\{ r < {1 \over 2} \}$ and $\Omega_{mod,t} \wedge \overline{\Omega_{mod,t}} = d {\rm vol}_{g_{co,t}}$, we have
\[
\int_{\{ r \geq \delta \} \cap \underline{X}} u_0 \, \Omega_0 \wedge \overline{\Omega_0} = O(\delta^{6-\beta})
\]
Sending $\delta \rightarrow 0$ gives (\ref{eq:integral-limit}) since $\beta < 6$.
\smallskip
\par We now obtain a contradiction. Let $\zeta_\epsilon$ be a cutoff function such that $\zeta_\epsilon \equiv 0$ on $\{ r < \epsilon \}$ , $\zeta_\epsilon \equiv 1$ on $\{ r \geq 2 \epsilon \}$, and $|\nabla \zeta_\epsilon|_{g_0} \leq C \epsilon^{-1}$. Then
\[
\int_{\underline{X}} \zeta |\nabla u_0|^2_{g_0} d {\rm vol}_{g_0}\leq \int_{\{\epsilon< r < 2 \epsilon \} } u_0 |\nabla u_0| |\nabla \zeta| d {\rm vol}_{g_{co,0}}\leq C  \epsilon^{-\beta} \epsilon^{-1-\beta} \epsilon^{-1} \epsilon^6,
\]
using the balanced condition on $g_0$ to integrate by parts, the decay of $u_0$, and that $g_0$ agrees with the cone metric $g_{co,0}$ near the singularities. Since $\beta \in (0,2)$, this shows that $\bar{\partial} u_0 \equiv 0$. By Hartog's theorem, $u_0$ defines a holomorphic function on the small resolution $X \rightarrow \underline{X}$. Thus $u_0$ is a constant function, and hence $u_0 \equiv 0$ by (\ref{eq:integral-limit}). But $|u_0(z_0)| \geq C^{-1} r(z_0)^{-\beta}$ which is a contradiction.
\smallskip
\par We must therefore rule out the possibility that $\liminf r(z_i) = 0$. We may assume that all $z_i$ lie in $V_{t_i} \cap \{ \| z \|^2 \leq 1 \}$. On this set, we use (\ref{eq:cot2check}) to convert estimates with respect to the global metrics $g_t$ into estimates with respect to the local model metrics $g_{co,t}$. For example, the sequence $\{ u_i \}$ restricted to $V_{t_i} \cap \{ \| z \|^2 \leq 1 \}$ satisfies
\[
\| u_i \|_{C^{2,\gamma}_{-\beta}(g_{co,t_i})} \leq 1.
\]
We can also estimate the Laplacian with respect to the model metric $g_{co,t}$ by using (\ref{eq:cot2check}):
\bea
| r^{2+\beta} \Delta_{g_{co,t_i}} u_i| &\leq& | r^{2+\beta} \Delta_{g_{t_i}} u_i| + |g_{t_i}^{-1} - g_{co,t_i}^{-1}|_{g_{co,t_i}} |r^{2+\beta} \nabla^2 u_i|_{g_{co,t_i}} \nonumber\\
&\leq& M_i^{-1} + C |t_i|^{2/3}.
\eea
Let $r(z_i)=\lambda_i$ and define the scaling map $S_{\lambda_i}: V_{t_i \lambda_i^{-3}} \rightarrow V_{t_i}$ by $S(x)= \lambda_i^{3/2} x$. Consider
\[
\tilde{u}_i : V_{t_i \lambda_i^{-3}} \cap \{ \| x \|^2 \leq \lambda_i^{-3} \} \rightarrow \mathbb{R}
\]
given by
\[
\tilde{u}_i(x) = \lambda_i^\beta u_i (\lambda_i^{3/2} x).
\]
This sequence of functions satisfies
\[
  \| \tilde{u}_i \|_{C^{2,\alpha}_{-\beta}(g_{co,t_i \lambda_i^{-3}})} \leq 1, \quad | r^{2+\beta} \Delta_{g_{co,t_i \lambda_i^{-3}}} \tilde{u}_i | \rightarrow 0, \quad |\tilde{u}_i(x_i)| \geq C^{-1}
\]
for $x_i = \lambda_i^{-3/2} z_i$ satisfying $\| x_i \|=1$. This can be shown using the $g_{co,t}$ rescaling relation (\ref{eq:gcoscaling}) which here takes the form $g_{co,t \lambda^{-3}} = \lambda^{-2} S_{\lambda}^* g_{co,t}$.
\smallskip
\par $\bullet$ Suppose $t_i \lambda_i^{-3} \rightarrow \kappa >0$  and $x_i \rightarrow x_\infty \in V_\kappa$ after taking a subsequence. Then from $\{ \tilde{u}_i \}$ we obtain a limiting function $u_\infty$ on $V_\kappa$ which satisfies $\Delta_{g_{co,\kappa}} u_\infty =0$. Since $|u_\infty| \leq C r^{-\beta}$ for $\beta>0$, then $u_\infty \equiv 0$, which is a contradiction since $|u_\infty(x_\infty)| \geq C^{-1}$.
\smallskip
\par $\bullet$ Suppose $t_i \lambda_i^{-3} \rightarrow 0$ and $x_i \rightarrow x_\infty \in V_0$ (with $\| x_\infty \|=1$) after taking a subsequence. Then from $\{ \tilde{u}_i \}$ we obtain a limiting function $u_\infty$ on $V_0$ which satisfies $\Delta_{g_{co,0}} u_\infty=0$. The limiting metric $g_{co,0}$ is a cone metric \cite{CandelasdlOssa} of the form $g_{co,0} = dr^2 + r^2 g_L$. The Riemannian cone $(V_0,g_{co,0})$ of dimension $n=6$ does not admit non-zero harmonic functions with singularity rate $|u| \leq C r^{-\beta}$ in the gap $\beta \in (0,n-2)$. Therefore $u_\infty \equiv 0$, which again contradicts $|u_\infty(x_\infty)| \geq C^{-1}$.
    
    \end{proof}

\section{Special Lagrangian Spheres}\label{sec: spheres}
Let $X \rightarrow \underline{X} \rightsquigarrow X_t$ denote a conifold transition. Equip the complex manifold $X_t$ with the Fu-Li-Yau \cite{FLY} balanced metric $\omega_t$ (solving $d \omega_t^2=0$) and holomorphic volume form $\Omega_t$ described above. Choose one of the vanishing cycles $L_t = \{ \| z \|^2 = |t| \}$ and let $f_t: S^3 \rightarrow X_t$ denote its embedding. In this section, we will prove our main theorem.

\begin{thm}
When $t$ is small enough, the vanishing cycle $L_t$ can be deformed to a special 3-cycle $\tilde{L}_t \subset X_t$ solving the supersymmetric equations
\[
  \tilde{\omega}_t|_{\tilde{L}_t} = 0, \ ({\rm Im} \, e^{-i \hat{\theta}} \Omega_t)|_{\tilde{L}_t} = 0, \ d (| \Omega_t |_{\tilde{g}_t} \tilde{\omega}^2_t)=0
\]
for an angle $e^{i \hat{\theta}} \in S^1$ and hermitian metric $\tilde{\omega}$ on $X_t$.
\end{thm}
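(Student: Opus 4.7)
The plan is a gluing-and-perturbation argument. The key observation is that the vanishing cycle $L_t = \{\|z\|^2 = |t|\} \subset V_t$ is \emph{exactly} special Lagrangian for the local K\"ahler Calabi--Yau model $(V_t,\omega_{co,t},\Omega_{mod,t})$ of Candelas--de la Ossa, with angle $e^{-i\theta_0}$ where $t = |t|e^{i\theta_0}$. The estimates \eqref{eq:cot2check} and Lemma~\ref{lem:Omega-est} say that on the scale of $L_t$, the actual Fu--Li--Yau data $(\omega_t,\Omega_t)$ differs from $(\lambda\omega_{co,t},\tau\Omega_{mod,t})$ (for some $\lambda > 0$ and $\tau = h(0) \in \mathbb{C}^*$) by quantities that vanish in $t$-uniform rescaled norms as $t \to 0$; thus $L_t$ is approximately special Lagrangian for $(\omega_t,\Omega_t)$ with angle $e^{-i\hat\theta_0}$ where $\hat\theta_0 := \theta_0 + \arg\tau$. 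First I would set $\tilde\omega_t := |\Omega_t|_{g_t}^{-2}\omega_t$, which is conformally balanced in dimension three since $d\omega_t^2=0$; because vanishing of a 2-form on a submanifold is conformally invariant, the problem reduces to perturbing $L_t$ to a nearby $\tilde L_t$ with $\omega_t|_{\tilde L_t}=0$ and $\mathrm{Im}(e^{-i\hat\theta}\Omega_t)|_{\tilde L_t}=0$.

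Following Section~\ref{section:linearization}, I would parametrize deformations by a 1-form $\alpha$ on $S^3 \cong L_t$: set $V = \alpha^\sharp$ (using $h_t = f_t^*g_t$), let $\xi = Jf_{t*}V$, extend $\xi$ arbitrarily to $X_t$, and push $L_t$ by its time-one flow $\varphi_1$. This yields the map
\[
\mathcal{F}_t(\alpha,\hat\theta) = \bigl(-(\varphi_1\circ f_t)^*\omega_t,\; -\star_{h_t}(\varphi_1\circ f_t)^*\mathrm{Im}(e^{-i\hat\theta}\Omega_t)\bigr),
\]
whose zeros are exactly the desired solutions. After rescaling by the Candelas--de la Ossa dilation $S\colon V_t \to V_1$, $S(z) = t^{-1/2}z$, the submanifold $(L_t,h_t)$ becomes the unit round sphere $(S^3,h_1)$, and the estimates~\eqref{eq:cot2check} and Lemma~\ref{lem:Omega-est} give $\mathcal{F}_t(0,\hat\theta_0) = O(|t|^\delta)$ for some $\delta > 0$ in $t$-uniform weighted H\"older norms on $S^3$.

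By Section~\ref{section:linearization}, the linearization at $(\alpha,\hat\theta) = (0,\hat\theta_0)$ is
\[
(\alpha,\dot\theta)\longmapsto\bigl(d\alpha + T\alpha,\;-d^\dagger_{h_t}(|\Omega_t|_{g_t}\alpha) - \dot\theta\,|\Omega_t|_{g_t}\bigr),
\]
and in the rescaled $t\to 0$ limit this becomes $(\alpha,\dot\theta)\mapsto(d\alpha,-d^\dagger\alpha - \dot\theta)$ on $S^3$, because $T$ vanishes in the K\"ahler model and $|\Omega_{mod,1}|_{g_{co,1}}=1$ is constant. Since $H^1(S^3,\mathbb{R})=0$, the model linearization has trivial kernel, and a short Hodge-theoretic check shows it is an isomorphism onto the natural target (closed 2-forms in the first factor, all functions in the second). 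The apparent one-dimensional overdetermination of the full target $\Omega^2(S^3) \oplus \Omega^0(S^3)$ is compensated by the integrability identity $d\omega_t|_{\tilde L} = d(\omega_t|_{\tilde L}) = 0$ valid along any Lagrangian, so the Newton/implicit-function scheme closes up. Uniform invertibility for small $|t|$ then follows by perturbation from the model.

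A standard contraction-mapping argument then produces, for all $0 < |t| < \epsilon$, a unique small solution $(\alpha_t,\hat\theta_t)$, giving a special Lagrangian $\tilde L_t \cong S^3$ whose rigidity is immediate from the invertibility of the linearization at the solution. The principal technical obstacle will be making every estimate uniform in $t$ as the vanishing cycle shrinks at rate $|t|^{1/3}$: one must choose weighted H\"older norms adapted to the Candelas--de la Ossa scaling, use~\eqref{eq:cot2check} and Lemma~\ref{lem:Omega-est} to pass between $g_t$ and the model $g_{co,t}$, and control both the extension of $\xi$ off $L_t$ and the nonlinear remainder terms in $\mathcal{F}_t$, all uniformly as $t \to 0$.
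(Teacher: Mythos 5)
Your proposal follows the same overall route as the paper: treat the vanishing cycle as an approximate solution for the Fu--Li--Yau data, compare to the Candelas--de la Ossa model via (\ref{eq:cot2check}) and Lemma~\ref{lem:Omega-est}, rescale by $S(z)=t^{-1/2}z$ to get $t$-uniform weighted norms, and close a contraction/implicit-function argument. One legitimate difference of detail: you treat the phase $\hat\theta$ as an extra unknown to absorb the constant functions in the cokernel, whereas the paper fixes $\hat\theta$ once and for all by the normalization ${\rm Im}\,(e^{-i\hat\theta}\int_{L_t}\Omega_t)=0$; since $\int(\varphi[\alpha]\circ f)^*\Omega_t$ is deformation-invariant ($\Omega_t$ is closed), that single normalization keeps the $0$-form component of $\F$ orthogonal to constants for \emph{every} $\alpha$, which is slightly cleaner but equivalent in effect to your extra parameter.

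There is, however, a genuine gap in your treatment of the $2$-form component. You assert that the first factor of $\F$ lands in \emph{closed} $2$-forms, justified by ``$d(\omega_t|_{\tilde L})=0$ valid along any Lagrangian.'' That identity is vacuous off the solution set: the iterates of your scheme are not Lagrangian, and since $\omega_t$ is only balanced ($d\omega_t\neq 0$), one has $d\big[(\varphi[\alpha]\circ f)^*\omega_t\big]=(\varphi[\alpha]\circ f)^*(d\omega_t)\neq 0$ in general. Consequently the $2$-form component of $\F(\alpha)$ has a nonzero coexact part, which your model linearization $(\alpha,\dot\theta)\mapsto(d\alpha,\,-d^\dagger\alpha-\dot\theta)$ cannot reach --- its image in degree $2$ is only $d\Lambda^1$, and the missing piece $d^\dagger\Lambda^3$ is infinite-dimensional, not the ``one-dimensional overdetermination'' you describe. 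This is precisely the new difficulty of the non-K\"ahler setting; in McLean's K\"ahler argument the correct statement is that $(\varphi\circ f)^*\omega$ is closed for \emph{all} deformations because $d\omega=0$ globally, and exact because its class equals $[f^*\omega]=0$ --- neither step survives here. The paper's resolution (Lemma~\ref{lem:rangeF} and the subsequent setup) is to exploit $\mathcal H^2(S^3)=0$ so that \emph{every} $2$-form, closed or not, lies in $d\Lambda^1\oplus d^\dagger\Lambda^3={\rm Im}(d+d^\dagger)$, and then to invert the full Hodge--Dirac operator $d+d^\dagger$ on $W_t=d\Lambda^*\oplus d^\dagger_{h_t}\Lambda^*$ with the scaled estimate (\ref{invert-linear}); equivalently, the potential for the $2$-form equation must be allowed a coexact piece $d^\dagger\gamma_3$ in addition to $d\alpha$. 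Your scheme needs this enlargement (or some other device for the coexact $2$-form cokernel) before the fixed-point argument closes; everything else --- the approximate-solution estimates, the $|t|^{1/3}$ scaling of the inverse, the quadratic estimates, and the rigidity statement from injectivity of the perturbed linearization using $H^1(S^3)=0$ --- is in line with the paper.
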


As noted in \S \ref{section:defn}, the conformally balanced metric $\tilde{\omega}_t$ can be obtained by conformal change of the Fu-Li-Yau balanced metric $\omega_t$, and so for simplicity we will work with $\omega_t$ (and show $\omega_t|_{\tilde{L}}=0$) rather than
$\tilde{\omega}_t$.

\subsection{Setup and notation}
The analysis in this section will entirely take place at a chosen vanishing cycle. We rotate coordinates such that a neighborhood of this vanishing cycle is identified with $V_t$ for $t>0$. We use the corresponding constant $\tau \in \mathbb{C}^*$ from Lemma \ref{lem:Omega-est} to change the normalization of $\Omega_t$ so that we may assume $\tau=1$. Similarly, we rescale $\omega_t$ and assume $\lambda=1$ in (\ref{eq:cot2check}).
\smallskip
\par We first note that $\int_{L_t} \Omega_t \neq 0$ for $t$ small enough. This is because
\[
\int_{L_t} \Omega_t = \int_{L_t} \Omega_{mod,t} + \int_{L_t} O(|t|^{1/2}) {\rm dvol}_{L_t} 
\]
by (\ref{eq:Omegamod2Omega}), and $\Omega_{mod,t}|_{L_t} = {\rm dvol}_{L_t}$. We let $e^{-i\hat{\theta}}$ be the angle of this vanishing cycle determined by
\be \label{eq:angle-cond}
{\rm Im} \bigg( e^{-i \hat{\theta}}\int_{L_t} \Omega_t \bigg) = 0, \quad {\rm Re} \bigg( e^{-i \hat{\theta}}\int_{L_t} \Omega_t \bigg) >0.
\ee
Let $h_{t}$ be the induced metric $f_t^* g_{co,t}$. For a given 1-form on $S^3$ denoted $\alpha$, we associate a vector field $V^i = h_{t}^{ik}\alpha_k$ on $S^3$ and a vector field $\xi^\alpha = (J_t)^\alpha{}_\beta \partial_i f^\beta_t V^i$ on $L_t \subset X_t$. We will sometimes drop the $t$ subscript to ease notation and simply write e.g. $f$, $L$. 
\smallskip
\par Let $\varphi[\alpha]$ be the map from $L$ to $X_t$ given by $\varphi[\alpha](p) = \exp_p(\xi_p)$, where $\exp$ is the Riemannian exponential map of $g_{co,t}$. The composition
\[
\varphi[\alpha] \circ f: S^3 \rightarrow X_t
\]
associates to $\alpha \in \Lambda^1(S^3)$ a deformation of the submanifold $L$. Define
\[
  \F: \Lambda^1(S^3) \rightarrow \Lambda^*(S^3)
\]
by the formula
\begin{equation}\label{eq: opF}
\F(\alpha) = -(\varphi[\alpha] \circ f)^* \omega_t - \star_{h_t} (\varphi[\alpha] \circ f)^*{\rm Im} \, e^{-i \hat{\theta}} \Omega_t.
\end{equation}
To deform $f: S^3 \rightarrow X_t$ into a special Lagrangian 3-cycle, we will look for a solution to $\F(\alpha)=0$ for small $|t| \ll 1$. We start with a lemma which states that $\F(0)$ is close to zero when the parameter $t$ is small. Thus the vanishing cycle $L_t$ is an approximate solution to the special Lagrangian equation.

\begin{lem} \label{lem:approx-soln}
  The approximate solution $f: S^3 \rightarrow X_t$ satisfies
  \[
| f^* \omega_t |_{h_{t}} + |t|^{1/3} |\nabla_{h_t} f^* \omega_t|_{h_t} \leq C |t|^{2/3}.
  \]
  \[
|f^* {\rm Im} \, e^{-i\hat{\theta}} \Omega_t|_{h_{t}} + |t|^{1/3} | \nabla_{h_t} f^* {\rm Im} \, e^{-i\hat{\theta}} \Omega_t|_{h_{t}} \leq C |t|^{1/2}.
  \] 
\end{lem}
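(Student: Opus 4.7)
The plan is to compare $\omega_t$ and $\Omega_t$ to the local model forms $\omega_{co,t}$ and $\Omega_{mod,t}$ on $V_t$, exploiting the fact that $L_t = \{\|z\|^2 = |t|\}$ is \emph{exactly} special Lagrangian with angle zero with respect to the Candelas--de la Ossa pair $(\omega_{co,t}, \Omega_{mod,t})$ (after rotating so $t>0$). The tangential-derivative bounds will rely on the standard pullback inequality
\[
|\nabla_{h_t}(f^*\eta)|_{h_t} \leq C\bigl(|\nabla_{g_{co,t}}\eta|_{g_{co,t}} + |\eta|_{g_{co,t}}\,|II|_{g_{co,t}}\bigr),
\]
valid for any ambient form $\eta$, where $II$ is the second fundamental form of $L_t$ in $(V_t,g_{co,t})$. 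By the Candelas--de la Ossa scaling (\ref{eq:gcoscaling}) together with the compactness of $L_1 \subset V_1$, one has $|II|_{g_{co,t}} \leq C|t|^{-1/3}$, and $r = |t|^{1/3}$ on $L_t$.

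For the $\omega_t$ estimate, since $f^*\omega_{co,t}=0$ I can write $f^*\omega_t = f^*(\omega_t - \omega_{co,t})$. Then (\ref{eq:cot2check}) at $k=0$ (after the normalization $\lambda=1$) immediately gives $|f^*\omega_t|_{h_t} \leq C|t|^{2/3}$, while (\ref{eq:cot2check}) at $k=1$ gives $|\nabla(\omega_t-\omega_{co,t})|_{g_{co,t}} \leq C|t|^{1/3}$ on $L_t$, and the second-fundamental-form term contributes $|t|^{2/3}\cdot|t|^{-1/3} = |t|^{1/3}$, both of the required size.

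The key conceptual step for the $\Omega_t$ bound is to determine $\hat\theta$. Using $f^*\Omega_{mod,t} = \mathrm{vol}_{h_t}$ and the scaling $\mathrm{vol}(L_t) \sim |t|$, Lemma \ref{lem:Omega-est} at $k=0$ (with $\tau=1$) yields
\[
\int_{L_t}\Omega_t = \mathrm{vol}(L_t) + O(|t|^{3/2}) = \mathrm{vol}(L_t)\bigl(1 + O(|t|^{1/2})\bigr),
\]
so condition (\ref{eq:angle-cond}) forces $\hat\theta = O(|t|^{1/2})$, and in particular $|\mathrm{Im}\,e^{-i\hat\theta}| \leq C|t|^{1/2}$.

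Finally, I split
\[
\mathrm{Im}\,(e^{-i\hat\theta}\Omega_t) = \mathrm{Im}\,(e^{-i\hat\theta}\Omega_{mod,t}) + \mathrm{Im}\,(e^{-i\hat\theta}(\Omega_t - \Omega_{mod,t})).
\]
Since $f^*\Omega_{mod,t} = \mathrm{vol}_{h_t}$ is real, the first summand pulls back to $(\mathrm{Im}\,e^{-i\hat\theta})\mathrm{vol}_{h_t}$, which is $O(|t|^{1/2})$ in $h_t$-norm by the angle estimate, and the second summand is $O(|t|^{1/2})$ directly by Lemma \ref{lem:Omega-est}. For the derivative, $\Omega_{mod,t}$ is parallel for $g_{co,t}$ (Calabi--Yau), so the first summand contributes only through the second fundamental form, bounded by $|t|^{1/2}\cdot|t|^{-1/3} = |t|^{1/6}$; the second summand is bounded by $|\nabla(\Omega_t-\Omega_{mod,t})|_{g_{co,t}} + |II|\cdot|\Omega_t-\Omega_{mod,t}|_{g_{co,t}} \leq C|t|^{1/6}$ using Lemma \ref{lem:Omega-est} at $k=1$. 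Multiplying by $|t|^{1/3}$ yields the advertised $O(|t|^{1/2})$ bound. The only genuinely non-mechanical step in the argument is the extraction of $\hat\theta = O(|t|^{1/2})$ from (\ref{eq:angle-cond}); everything else is bookkeeping combining (\ref{eq:cot2check}), Lemma \ref{lem:Omega-est}, and the scaling of the second fundamental form.
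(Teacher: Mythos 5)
Your argument follows the paper's proof essentially verbatim in its architecture: both reduce everything to the model-comparison estimates (\ref{eq:cot2check}) and Lemma \ref{lem:Omega-est}, extract $|{\rm Im}\, e^{-i\hat{\theta}}| \leq C|t|^{1/2}$ from the normalization (\ref{eq:angle-cond}) exactly as in (\ref{eq:zetaim-est}), and then split off the model forms $\omega_{co,t}$ and $\Omega_{mod,t}$, using that $L_t$ is exactly special Lagrangian for the Candelas--de la Ossa pair. The paper dismisses the gradient bounds with ``the estimates on derivatives are similar,'' and your second-fundamental-form bookkeeping (with $|II|_{g_{co,t}} \leq C|t|^{-1/3}$ by scaling and $r = |t|^{1/3}$ on $L_t$) is a correct way to fill that in for the difference terms $\omega_t - \omega_{co,t}$ and $\Omega_t - \Omega_{mod,t}$.

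One step is mis-justified, although the conclusion survives. For the derivative of the model summand $f^*{\rm Im}(e^{-i\hat{\theta}}\Omega_{mod,t})$ you claim a bound $|t|^{1/2}\cdot |t|^{-1/3}$ ``through the second fundamental form.'' But your stated pullback inequality involves the \emph{ambient} norm $|{\rm Im}(e^{-i\hat{\theta}}\Omega_{mod,t})|_{g_{co,t}}$, which is $O(1)$: the $II$-terms contract the form with normal vectors, so the smallness of its restriction to $TL_t$ does not help. Taken literally, your inequality gives only $O(|t|^{-1/3})$ for this term, i.e.\ $O(1)$ after multiplying by $|t|^{1/3}$, which would destroy the estimate. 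The correct observation is simpler: since $f^*\Omega_{mod,t} = d{\rm vol}_{h_t}$, one has $f^*{\rm Im}(e^{-i\hat{\theta}}\Omega_{mod,t}) = ({\rm Im}\, e^{-i\hat{\theta}})\, d{\rm vol}_{h_t}$, a constant multiple of the induced volume form, which is parallel on $(S^3, h_t)$; hence this summand contributes nothing at all to $\nabla_{h_t} f^* {\rm Im}\, e^{-i\hat{\theta}}\Omega_t$. With that one-line replacement your proof is complete and matches the paper's.
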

\begin{proof}
  \par The hermitian form $\omega_t$ can be estimated using (\ref{eq:cot2check})
 \[
|f^* \omega_t|_{h_{t}} = |f^* (\omega_t-\omega_{co,t})|_{f^* g_{co,t}} \leq C |t|^{2/3},
  \]
  since $f^* \omega_{co,t}=0$. The estimate on $\nabla f^* \omega_t$ is similar.
  \smallskip
  \par Next, we estimate $e^{-i \hat{\theta}} \Omega_t$. From (\ref{eq:angle-cond}), we have
  \[
0 = {\rm Im} \,  \int_{L_t} \bigg( e^{-i \hat{\theta}}  \Omega_{mod,t}+ (e^{-i \hat{\theta}} \Omega_t - e^{-i \hat{\theta}}  \Omega_{mod,t}) \bigg).
\]
Therefore (\ref{eq:Omegamod2Omega}) and $\Omega_{mod,t}|_{L_t} = d {\rm vol}_{L_t}$ give the estimate
\be \label{eq:zetaim-est}
\bigg| {\rm Im} \, e^{-i \hat{\theta}}  \bigg| \leq C|t|^{1/2}.
\ee
We can therefore estimate
\bea
|f^* {\rm Im} \, e^{-i \hat{\theta}} \Omega_t|_{h_{t}} &\leq& |f^* {\rm Im} \, (e^{-i \hat{\theta}} \Omega_t-e^{-i \hat{\theta}}  \Omega_{mod,t})|_{f^* g_{co,t}} + |f^* {\rm Im} \, e^{-i \hat{\theta}}  \Omega_{mod,t} |_{f^*g_{co,t}} \nonumber\\
&\leq& |\Omega_t -  \Omega_{mod,t}|_{g_{co,t}} + |{\rm Im} \, e^{-i \hat{\theta}} | \leq C |t|^{1/2},
\eea
using (\ref{eq:Omegamod2Omega}). Higher order estimates involving derivatives are similar.
\end{proof}

An analogous argument to the one used in the proof of the previous lemma (using (\ref{eq:angle-cond})) implies ${\rm Re} \, e^{-i \hat{\theta}} \geq - C|t|^{1/2}$. We combine this with (\ref{eq:zetaim-est}) and record the following estimate for future use:
\be \label{eq:zeta-est}
|1-{\rm Re} \, e^{-i\hat{\theta}}| \leq C|t|^{1/2}, \quad  |{\rm Im} \, e^{-i\hat{\theta}}| \leq C |t|^{1/2}.
\ee
Next, we will study and estimate the derivative of $\F$. Before doing this, we note that the range of $\F$ is contained in the subspace of differential forms on $(S^3,h_t)$ generated by $d$ and $d^\dagger_{h_t}$.
\begin{lem} \label{lem:rangeF}
For the map $\F$ defined in~\eqref{eq: opF} we have
  \[
    \F: \Lambda^1(S^3) \rightarrow {\rm Im} (d+d^\dagger_{h_{t}}).
  \]
\end{lem}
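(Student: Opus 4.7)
My plan for Lemma \ref{lem:rangeF} is to identify ${\rm Im}(d + d^\dagger_{h_t})$ via Hodge theory on $S^3$ and then verify the only nontrivial constraint by a homotopy argument. Since $d + d^\dagger_{h_t}$ is a self-adjoint elliptic operator on $\Lambda^*(S^3)$, the Hodge decomposition gives ${\rm Im}(d+d^\dagger_{h_t}) = \HH^*(S^3)^{\perp}$, the $L^2$-orthogonal complement of the harmonic forms. Because $b_1(S^3) = b_2(S^3) = 0$, the harmonic forms on $S^3$ are spanned by the constants in degree $0$ and by a constant multiple of $d{\rm vol}_{h_t}$ in degree $3$. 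Since $\F(\alpha) \in \Lambda^2(S^3) \oplus \Lambda^0(S^3)$, the $2$-form component $-(\varphi[\alpha] \circ f)^*\omega_t$ lies in $\HH^\perp$ automatically, and the only condition that remains to be checked is that the $0$-form component be orthogonal to the constants; equivalently,
\[
\int_{S^3} (\varphi[\alpha] \circ f)^* \bigl( {\rm Im}\, e^{-i\hat{\theta}} \Omega_t \bigr) \;=\; {\rm Im} \bigl( e^{-i\hat\theta} \int_{S^3} (\varphi[\alpha] \circ f)^* \Omega_t \bigr) \;=\; 0.
\]

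To verify this identity I would use a standard homotopy argument. The smooth map $F \colon S^3 \times [0,1] \to X_t$ defined by $F(p,s) = \varphi[s\alpha](f(p))$ gives an isotopy from $f$ to $\varphi[\alpha] \circ f$. Since $\Omega_t$ is holomorphic, hence $d$-closed, Stokes' theorem on the cylinder yields
\[
\int_{S^3} (\varphi[\alpha] \circ f)^* \Omega_t \;-\; \int_{L_t} \Omega_t \;=\; \int_{S^3 \times [0,1]} F^*(d\Omega_t) \;=\; 0.
\]
Combined with the angle normalization (\ref{eq:angle-cond}), which was chosen precisely so that ${\rm Im}(e^{-i\hat{\theta}} \int_{L_t} \Omega_t) = 0$, this gives the required vanishing.

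The only subtlety is in making sure that the Hodge decomposition picks out the correct constraint in each degree; once this is set up, the lemma collapses to a single topological calculation using closedness of $\Omega_t$ together with the definition of $\hat\theta$. I do not foresee any serious analytic obstacle. The important take-away for later use is that the $2$-form component of $\F$ carries no constraint at all, which is exactly what makes ${\rm Im}(d + d^\dagger_{h_t})$ the natural codomain for setting up the nonlinear problem $\F(\alpha) = 0$ via an implicit function theorem in the next step.
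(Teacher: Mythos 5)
Your proposal is correct and follows essentially the same route as the paper: reduce via Hodge theory on $S^3$ to the single constraint that the $0$-form component of $\F(\alpha)$ be orthogonal to constants, and then verify this by the homotopy-invariance of $\int (\varphi[\alpha]\circ f)^*\Omega_t$ (using $d\Omega_t=0$ and Stokes) together with the angle normalization (\ref{eq:angle-cond}). The paper cites this deformation argument to Marshall rather than writing out the cylinder computation, but the content is identical.
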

\begin{proof}
By Hodge theory, we need to show that we can write $\F(\alpha) = d \chi_1 + d^\dagger \chi_2$ for $\chi_1,\chi_2 \in \Lambda^*(S^3)$. First, we do have
\[
  (\varphi[\alpha] \circ f)^* \omega_t \in d \Lambda^1 \oplus d^\dagger \Lambda^3,
\]
since $H^2(S^3,\mathbb{R})=0$. Next, we verify that
\[
 \star (\varphi[\alpha] \circ f)^* {\rm Im} \, e^{-i \hat{\theta}} \Omega_t \in d^\dagger \Lambda^1.
\]
For this, we need to show it is orthogonal to the constant functions. The identity
\[
\int_{S^3} (\varphi[\alpha] \circ f)^* {\rm Im} \, e^{-i \hat{\theta}}\Omega_t  = 0,
  \]
holds since this integral is equal to the integral of $ f^* {\rm Im} \, e^{-i \hat{\theta}} \Omega_t$ by a deformation argument \cite{Marshall}, and the latter is zero by the choice of the angle (\ref{eq:angle-cond}).
  \end{proof}

 We now compute the linearization of $\F$ at the origin. The family of maps $\psi_\epsilon:= \varphi[\epsilon \alpha]$ satisfies $\psi_0=id$ and ${d \over d \epsilon} |_{\epsilon=0} \, \psi_\epsilon = \xi$ with $\xi= J f_* h^{-1} \alpha$. Thus
  \[
D \F|_0 (\alpha) = {d \over d \epsilon} \bigg|_{\epsilon=0} \F(\epsilon \alpha) = -f^* L_\xi \omega_t - \star_{h_{t}} f^* L_\xi {\rm Im} \, e^{-i \hat{\theta}} \Omega_t.
\]
Working on the local set $V_t$ with $t>0$, we introduce the model 2-form $\omega_{co,t}$ and model 3-form $\Omega_{mod,t}$ into the expression and rewrite this as
 \[
   D \F|_0(\alpha) = -f^* (L_\xi \omega_{co,t}) - \star_{h_{t}} f^* L_\xi {\rm Im} \, \Omega_{mod,t} + \mathcal{E}
 \]
 where
 \bea \label{eq:linearizederror}
 \mathcal{E} &=&  (-{\rm Re} \, e^{-i \hat{\theta}} +1)\star_{h_{t}} f^* L_\xi {\rm Im} \, \Omega_{mod,t} -  ({\rm Im} \, e^{-i \hat{\theta}}) \star_{h_{t}} f^* L_\xi {\rm Re} \, \Omega_{mod,t} \nonumber\\
 &&- f^*L_\xi (\omega_t - \omega_{co,t}) + {\rm Im}  \star_{h_t} f^* L_\xi (- e^{-i \hat{\theta}}\Omega_t +  e^{-i \hat{\theta}}\Omega_{mod,t}).
 \eea
Since $f^* \omega_{co,t} = f^* {\rm Im} \, \Omega_{mod,t}= 0$, $d \omega_{co,t}=0$ and $| \Omega_{mod,t}|_{g_{co,t}}=1$, the well-known \cite{McLean} formula for deformation of special Lagrangian submanifolds (\ref{eq:variation-formula}) gives
  \be \label{eq:linearized}
D \F|_0(\alpha) = (d +d^\dagger_{h_{t}}) \alpha + \mathcal{E}.
  \ee
The linearized operator is approximately $(d+d^\dagger)$ if the error term $\mathcal{E}$ is small. Motivated by this, we decompose
\[
\F(\alpha) = \F(0) + (d+d^\dagger_{h_{t}}) \alpha + \Q(\alpha),
\]
where we define
\[
\Q: \Lambda^1(S^3) \rightarrow {\rm Im}(d+d^\dagger)
\]
by
\[
\Q(\alpha) = \F(\alpha) - \F(0) - (d+d^\dagger_{h_{t}}) \alpha.
\]
Lemma \ref{lem:rangeF} gives the justification for $\Q(\alpha) \in d+d^\dagger$. We also define
\[
\mathcal{N}(\alpha) = (d+d^\dagger_{h_{t}})^{-1}[-\F(0) - \Q(\alpha)]
\]
so that $\F(\alpha)=0$ is equivalent to the fixed-point equation
\[
\mathcal{N}(\alpha)=\alpha.
\]

\subsection{Inverting $d+d^\dagger_{h_{t}}$} Let $\alpha \in \Lambda^*(S^3)$. We will use the
usual H\"older norms
\[
  \| \alpha \|_{C^\gamma(h)} = \| \alpha \|_{L^\infty(h)} + [ \alpha ]_{C^\gamma(h)}, \quad [\alpha]_{C^\gamma(h)} = \sup_{p \neq q} { |\alpha(p)-\alpha(q)|_h \over d_h^\gamma(p,q)}
\]
with the difference $(\alpha(p)-\alpha(q))$ understood by parallel transport along geodesics. Let
\[
W_t = d \Lambda^*(S^3) \oplus d^\dagger_{h_{t}} \Lambda^*(S^3).
\]
By standard elliptic theory (e.g. \cite{DougNirenberg,Marshall}), for $\gamma \in (0,1)$ we have the estimate
\[
\| \alpha \|_{L^\infty(h_1)} + \| \nabla_{h_1} \alpha \|_{C^\gamma(h_1)} \leq C \|(d+d^\dagger_{h_1}) \alpha \|_{C^\gamma(h_1)},
  \]
for all $\alpha \in W_1$. By the scaling relation $h_{t} = |t|^{2/3} h_1$, there exists $C>1$ independent of $t$ such that
\bea 
& \ & \| \alpha \|_{L^\infty(h_{t})} + |t|^{1/3} \| \nabla \alpha \|_{L^\infty(h_{t})} + |t|^{1/3}|t|^{\gamma/3} [ \nabla \alpha ]_{C^\gamma(h_{t})} \nonumber\\
&\leq& C |t|^{1/3} \bigg( \| (d+d^\dagger_{h_{t}}) \alpha \|_{L^\infty(h_{t})} + |t|^{\gamma/3}[ (d+d^\dagger_{h_{t}}) \alpha ]_{C^\gamma(h_{t})} \bigg)
\eea
for all $\alpha \in W_t$. As suggested by this estimate, we will use the weighted norms
\[
\| \alpha \|_{C^{0,\gamma}_t} := \| \alpha \|_{L^\infty(h_{t})} + |t|^{\gamma/3} [\alpha ]_{C^\gamma(h_{t})} 
\]
and
\[
\| \alpha \|_{C^{1,\gamma}_t} := \| \alpha \|_{L^\infty(h_{t})} + |t|^{1/3} \| \nabla \alpha \|_{L^\infty(h_{t})} + |t|^{1/3}|t|^{\gamma/3} [ \nabla \alpha ]_{C^\gamma(h_{t})} .
\]
The estimate on the inverse of $d+d^\dagger$ then becomes
\be \label{invert-linear}
\| \alpha \|_{C^{1,\gamma}_t} \leq C |t|^{1/3} \| (d+d^\dagger_{h_t}) \alpha \|_{C^{0,\gamma}_t}
\ee
for all $\alpha \in W_t$. For future use, we note the scaled inequality
\be \label{eq:holder2grad}
|t|^{\gamma/3} [\alpha ]_{C^\gamma(h_t)} \leq C |t|^{1/3} |\nabla \alpha|_{L^\infty(h_t)}
\ee
which follows from the inequality when $t=1$. 

\subsection{Contraction mapping}
We will work on the subspace of differential forms on $S^3$ given by $W_t = d \oplus d^\dagger_{h_{t}}$.  Let $0<\delta<(1/2)$, $0<\gamma<1$, and define
\be \label{eq:U-defn}
\mathcal{U}_t = \{ \alpha \in C^{1,\gamma}(W_t) :  \| \alpha \|_{C^{1,\gamma}_t} <  |t|^{1/3} |t|^\delta \}.
\ee
By the previous discussion, we have $\mathcal{N} : C^{1,\gamma}(W_t) \rightarrow C^{1,\gamma}(W_t)$, and now we aim to show that
\[
\mathcal{N}: \mathcal{U}_t \rightarrow \mathcal{U}_t
\]
is a contraction map. This will allow us to conclude the existence of a fixed point of $\mathcal{N}$. We start by showing the following estimate
\be \label{eq:contraction}
\| \mathcal{N}(u) - \mathcal{N}(v) \|_{C^{1,\gamma}_t} \leq {1 \over 2} \| u - v \|_{C^{1,\gamma}_t}, \quad u,v \in \mathcal{U}_t.
\ee
By the estimate (\ref{invert-linear}) on the inverse of $d+d^\dagger$, we estimate
\bea
\| \mathcal{N}(u) - \mathcal{N}(v) \|_{C^{1,\gamma}_t}  &=&  \| (d+d^\dagger_h)^{-1}(\mathcal{Q}(u) - \mathcal{Q}(v)) \|_{C^{1,\gamma}_t}  \nonumber\\
&\leq& C  |t|^{1/3} \| \mathcal{Q}(u) - \mathcal{Q}(v) \|_{C^{0,\gamma}_t}.
\eea
To prove (\ref{eq:contraction}), we will show
\be \label{Q-diff}
|t|^{1/3} \| \mathcal{Q}(u) - \mathcal{Q}(v) \|_{C^{0,\gamma}_t} \leq C |t|^{\delta} \|u-v\|_{C^{1,\gamma}_t}.
\ee
The difference between the $\mathcal{Q}$-terms is
\bea
\mathcal{Q}(u)-\mathcal{Q}(v) &=& \int_0^1 {d \over ds} \F(su + (1-s)v) \, ds  - (d+d^\dagger_{h_{t}}) (u-v) \nonumber\\
&=& \int_0^1 \bigg(D \F|_{u_s} - (d+d^\dagger) \bigg) (u-v) ds 
\eea
with $u_s = su+(1-s)v$. Therefore to show (\ref{Q-diff}), it suffices to estimate the difference between the approximate
linearized operator and the actual linearized operator. For $t$ small enough, estimate (\ref{eq:contraction}) thus follows from the following lemma.
\begin{lem}
\be \label{linearized-diff}
|t|^{1/3} \bigg\| \left( D \F|_{\eta} - (d+d^\dagger_{h_{t}})\right) \beta \bigg\|_{C^{0,\gamma}_t}  \leq C |t|^{\delta} \| \beta \|_{C^{1,\gamma}_t} 
\ee
for all $\eta \in \mathcal{U}_t$ and $\beta \in C^{1,\gamma}(W_t)$.
\end{lem}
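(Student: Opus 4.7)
The plan is to decompose
\[
\left(D\F|_\eta - (d+d^\dagger_{h_t})\right)\beta = \mathcal{E}(\beta) + \left(D\F|_\eta - D\F|_0\right)\beta,
\]
where $\mathcal{E}(\beta) = (D\F|_0 - (d+d^\dagger_{h_t}))\beta$ is the base-point error given explicitly by (\ref{eq:linearizederror}), and to handle the two pieces by separate arguments. The first measures the gap between the model (symplectic, Calabi--Yau) linearization and the actual linearization at the vanishing cycle; the second measures how much the linearization moves as we perturb the base point away from $0$.

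For the base-point error $\mathcal{E}$, the strategy is to plug into the four terms of (\ref{eq:linearizederror}) the three smallness estimates already established: by (\ref{eq:zeta-est}) we have $|1-{\rm Re}\, e^{-i\hat\theta}|+|{\rm Im}\, e^{-i\hat\theta}|\leq C|t|^{1/2}$; by (\ref{eq:cot2check}) the metric defect $\omega_t-\omega_{co,t}$ has $C^k$-size $|t|^{2/3}r^{-k}$; and by Lemma~\ref{lem:Omega-est} the defect $\Omega_t-\Omega_{mod,t}$ has size $|t|^{1/2}|t|^{-k/3}$ on $L_t$. The Lie derivative $L_\xi$ with $\xi=Jf_*h_t^{-1}\beta$ introduces at most one derivative of $\beta$; since $|\beta|_{h_t}\leq\|\beta\|_{C^{1,\gamma}_t}$ and $|\nabla\beta|_{h_t}\leq|t|^{-1/3}\|\beta\|_{C^{1,\gamma}_t}$, the worst of the four terms is bounded in $C^{0,\gamma}_t$ by $C|t|^{1/2}\cdot|t|^{-1/3}\|\beta\|_{C^{1,\gamma}_t}=C|t|^{1/6}\|\beta\|_{C^{1,\gamma}_t}$, and the $C^\gamma$-seminorm contribution is handled identically using (\ref{eq:holder2grad}). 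Multiplying by the $|t|^{1/3}$ prefactor from the statement yields $C|t|^{1/2}\|\beta\|_{C^{1,\gamma}_t}\leq C|t|^\delta\|\beta\|_{C^{1,\gamma}_t}$ since $\delta<1/2$ and $|t|$ is small.

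For the shift from $D\F|_0$ to $D\F|_\eta$, I would use the fundamental theorem of calculus,
\[
D\F|_\eta(\beta) - D\F|_0(\beta) = \int_0^1 D^2\F|_{r\eta}(\eta,\beta)\,dr,
\]
and bound $D^2\F$ by rescaling to the fixed geometry via the scaling map $S:V_t\to V_1$ with $g_{co,t}=|t|^{2/3}S^*g_{co,1}$ and $h_t=|t|^{2/3}h_1$. On the rescaled, $t$-independent model $(S^3,h_1)\subset(V_1,g_{co,1})$, the operator $\F$ is a smooth composition of a Riemannian exponential, pullbacks of smooth forms, and a Hodge star, so in a fixed $C^{1,\gamma}(h_1)$-ball around $0$ it admits a uniform bound $\|D^2\F_1|_u(\eta,\beta)\|_{C^{0,\gamma}(h_1)}\leq C\|\eta\|_{C^{1,\gamma}(h_1)}\|\beta\|_{C^{1,\gamma}(h_1)}$. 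Converting these $h_1$-norms into the weighted $\|\cdot\|_{C^{1,\gamma}_t}$ and $\|\cdot\|_{C^{0,\gamma}_t}$ norms (using that 1-forms rescale by $|t|^{-1/3}$ and 2-forms by $|t|^{-2/3}$ between the two metrics) produces an estimate of the form
\[
\|D\F|_\eta(\beta)-D\F|_0(\beta)\|_{C^{0,\gamma}_t}\leq C|t|^{-2/3}\|\eta\|_{C^{1,\gamma}_t}\|\beta\|_{C^{1,\gamma}_t}.
\]
Inserting $\|\eta\|_{C^{1,\gamma}_t}\leq|t|^{1/3+\delta}$ from $\eta\in\mathcal{U}_t$ and multiplying by $|t|^{1/3}$ gives exactly $C|t|^\delta\|\beta\|_{C^{1,\gamma}_t}$, as required.

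The main obstacle is the careful bookkeeping of $|t|$-scalings in the Taylor-remainder step, in particular verifying that (i) $\eta\in\mathcal{U}_t$ is small enough in the rescaled $C^{1,\gamma}(h_1)$ norm that $\varphi[r\eta]$ remains a smooth embedding (which is controlled by the injectivity radius of $(X_t,g_{co,t})$ along $L_t$, a uniform constant after rescaling), and (ii) the smoothness constants for $D^2\F$ are uniform in $t$ once the true data $(\omega_t,\Omega_t)$ are replaced by the model $(\omega_{co,t},\Omega_{mod,t})$, with the difference absorbed into the base-point error $\mathcal{E}$. Both points follow from the uniform Candelas--de la Ossa bounds (\ref{eq:r-2gcot}) together with (\ref{eq:cot2check}) and Lemma~\ref{lem:Omega-est}, but writing this cleanly is the technical crux of the argument.
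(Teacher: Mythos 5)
Your proposal is correct and follows essentially the same route as the paper: the same decomposition into the base-point error $\mathcal{E}=D\F|_0-(d+d^\dagger_{h_t})$ (estimated via (\ref{eq:zeta-est}), (\ref{eq:cot2check}) and Lemma \ref{lem:Omega-est}, with worst contribution $|t|^{1/3}\cdot|t|^{1/6}=|t|^{1/2}\le|t|^\delta$) and the shift $D\F|_\eta-D\F|_0$ (estimated by rescaling to the fixed geometry $(V_1,g_{co,1})$ and bounding the second derivative of the rescaled operator, yielding $|t|^{-2/3}\|\eta\|_{C^{1,\gamma}_t}\|\beta\|_{C^{1,\gamma}_t}$). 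The only cosmetic difference is your use of the integral Taylor remainder where the paper invokes the mean value theorem on the coordinate representation $P(\alpha_i,p_{jk})$.
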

\begin{proof} We can split the terms as
\[
\bigg( D \F|_\eta - D \F|_0 \bigg) + \bigg( D \F|_0-(d+d^\dagger_{h_{t}}) \bigg) := ({\rm I}) + ({\rm II})
\]
and we estimate these separately.
\smallskip
\par $\bullet$ Term $({\rm I})$. See e.g. \cite{Butscher,JoyceIII,Lee} for related calculations in the symplectic setting. We start by recalling the notation: let $t>0$, denote by $\omega_t, \Omega_t$ the global forms on $X_t$, $f_t: S^3 \rightarrow V_t \subset X_t$ the parametrization of the vanishing 3-sphere $\{ \|z \|^2 = |t| \}$, and $h_t = f_t^* g_{co,t}$ induced on $S^3$ from the model metric $g_{co,t}$. Given $\alpha \in \Lambda^1(S^3)$, the task is to understand the $t$-dependence of
\[
\F[\alpha] = \bigg[ - \varphi_t[\alpha]^* \omega_t - \star_{h_t} \varphi_t[\alpha]^* {\rm Im} \, e^{-i \hat{\theta}} \Omega_t \bigg] \in \Lambda^2(S^3) + \Lambda^0(S^3)
\]
where $\varphi_t[\alpha]: S^3 \rightarrow V_t$ is
\[
  \varphi_t[\alpha](u) = \exp_{g_{co,t}} (f_t(u), \xi_t), \quad \xi_t = J_t (f_t)_* h_t^{-1} \alpha.
\]
The scaling map $S: V_t \rightarrow V_1$ given by $S(z) = t^{-1/2} z$ satisfies $S^*(|t|^{2/3} g_{co,1}) = g_{co,t}$. By naturality of the exponential map,
\bea
S \circ \varphi_t[\alpha] (u) &=& \exp_{|t|^{2/3} g_{co,1}} (f_1(u), S_* \xi_t) \nonumber\\
&=& \exp_{g_{co,1}}(f_1(u), |t|^{-2/3} \xi_1) \nonumber\\
&=&  \varphi_1 \left[|t|^{-2/3} \alpha \right](u) .
\eea
Here we used $f_t = S^{-1} \circ f_1$ and $h_t = |t|^{2/3} h_1$. We now just write $\varphi=\varphi_1$. We can rewrite
\[
\varphi_t[\alpha]^* \omega_t = (S \circ \varphi_t[\alpha])^*
(S^{-1})^* \omega_t = |t|^{2/3} \varphi[|t|^{-2/3}\alpha]^* \check{\omega}_t
\]
where $\check{\omega}_t = |t|^{-2/3} (S^{-1})^* \omega_t$ is a
sequence of metrics on $V_1$ which is uniformly bounded with respect
to $g_{co,1}$; boundedness follow from pulling-back (\ref{eq:cot2check}). Similarly
\[
\star_{h_t} \varphi_t[\alpha]^* {\rm Im} \, e^{- i\hat{\theta}} \Omega_t = \star_{h_1} \varphi[|t|^{-2/3}\alpha]^* \check{\Omega}_t,\quad \check{\Omega}_t = |t|^{-1} (S^{-1})^* \Omega_t
\]
for $\check{\Omega}_t$ a sequence uniformly bounded in $t$ on $(V_1,g_{co,1})$. Putting everything together, we can see the $t$-dependence of $\F[\alpha]$ in local coordinates $\{ u^i \}$ on $S^3$ and coordinates $\{ x^\mu \}$ on $V_1$ and geometric objects on the fixed $V_1$. We write
\[
\F[\eta](u) = P \left[ |t|^{-2/3}\eta \right](u)
\]
where the map $P[\alpha]$ in $u^i,x^\mu$ coordinates is
\[
P[\alpha] = {1 \over 2} |t|^{2/3} P_{ij} du^i \wedge du^j + \hat{P}
\]
with
\[
P_{ij} = -\bigg[{\partial \varphi^\mu \over \partial u^i} {\partial \varphi^\nu \over \partial u^j} (\check{\omega}_t)_{\mu \nu}\circ \varphi \bigg] 
\]
\[
\hat{P} = -\star_{h_1} \bigg[ {\partial \varphi^\mu \over \partial u^i} {\partial \varphi^\nu \over \partial u^j} {\partial \varphi^\gamma \over \partial u^k} {\rm Im} \, e^{-i \hat{\theta}} ( \check{\Omega}_t)_{\mu \nu \gamma} \circ \varphi \bigg].
\]
Note that $\varphi$ only depends on the geometry of $(V_1,g_{co,1})$ and the map $P[\alpha]$ in local coordinates depends on $(\alpha_i, \partial_j \alpha_k)$ which we write as $P(\alpha_i,p_{jk})$.
\par We now compute the linearization of $\F = {1 \over 2} F_{ij} du^i \wedge d u^j + \hat{F}$. We start with the 2-form part $F_{ij}[\eta] = |t|^{2/3} P_{ij}[|t^{-2/3}\eta]$. The derivative at $\eta \in \mathcal{U}$ in the direction $\beta \in W$ is
\[
[DF|_\eta\beta]_{ij} = \bigg( {\partial P_{ij} \over \partial \alpha_k} \left[ |t|^{-2/3} \eta \right] \beta_k + {\partial P_{ij} \over \partial p_{k \ell}} \left[ |t|^{-2/3}\eta \right] \partial_k \beta_\ell \bigg).
\]
By the mean value theorem, there is $\eta_\theta = \theta |t|^{-2/3} \eta$, $0 \leq \theta \leq 1$ such that
\bea
[DF|_\eta\beta - DF|_0 \beta]_{ij} &=& |t|^{-2/3} \bigg[{\partial^2 P_{ij} \over \partial \alpha_\ell \partial \alpha_k}(\eta_\theta) \eta_\ell \beta_k + {\partial^2 P_{ij} \over \partial p_{rs} \partial \alpha_k}(\eta_\theta) \partial_r \eta_s \beta_k \nonumber\\
&&+ {\partial^2 P_{ij} \over \partial \alpha_p \partial p_{k \ell}}(\eta_\theta) \eta_p \partial_k \beta_\ell + {\partial^2 P_{ij} \over \partial p_{rs} \partial p_{k \ell}}(\eta_\theta) \partial_r \eta_s \partial_k \beta_\ell \bigg].
\eea
Let $V = \{\alpha \in \Lambda^1(S^3) : |\alpha|_{f_1^* g_{co,1}} < 1 \}$ so that $\eta \in \mathcal{U}_t$ implies $\eta_\theta \in V$ and $|\xi(\eta_\theta)|_{g_{co,1}} < 1$, and all tensors in the definition of $P$ are restricted to a compact set containing the 3-cycle $L_1=\{ \| z \|^2 = 1 \}$ in $(V_1,g_{co,1})$. We now take the norm of this 2-form with respect to $h_t = |t|^{2/3} h_1$.
\bea
& \ &  \bigg| (( DF|_\eta - DF|_0) \beta)_{ij}
\bigg|_{h_t}   \nonumber\\
&\leq& |t|^{-4/3} \bigg[ \sup_{\alpha \in V} \bigg|{\partial^2 P_{ij} \over \partial \alpha_\ell \partial \alpha_k} \bigg|_{h_1} |\eta |_{h_1} |\beta|_{h_1} + \sup_{\alpha \in V}  \bigg|  {\partial^2 P_{ij} \over \partial p_{k \ell} \partial p_{rs}} \bigg|_{h_1} |\nabla \eta|_{h_1} |\nabla \beta|_{h_1} \nonumber\nonumber\\
&&+ \sup_{\alpha \in V} \bigg|  {\partial^2 P_{ij} \over \partial \alpha_p \partial p_{k \ell}} \bigg|_{h_1} (|\nabla \eta|_{h_1} |\beta|_{h_1} + |\eta|_{h_1} |\nabla \beta|_{h_1} ) \bigg]\nonumber\\
&\leq& C |t|^{-4/3} (|\eta|_{h_1} |\beta|_{h_1} + |\nabla \eta|_{h_1} |\nabla \beta|_{h_1} + |\nabla \eta|_{h_1} |\beta|_{h_1} + |\eta|_{h_1} |\nabla \beta|_{h_1} ) \nonumber\\
&\leq&  C  |t|^{-4/3} |t|^{2/3} \|\eta \|_{C^{1,\gamma}_t} \| \beta \|_{C^{1,\gamma}_t}. \nonumber
\eea
This is the 2-form contribution $F_{ij}$ coming from $\varphi_t^* \omega_t$. The 0-form contribution $\hat{F}$ coming from $\star_t \varphi_t^* {\rm Im} \, e^{-i \hat{\theta}} \Omega_t$ is estimated in the same way:
\[
\bigg| (D \hat{F}|_\eta - D \hat{F}|_0) \beta \bigg| \leq C |t|^{-4/3} |t|^{2/3} \|\eta \|_{C^{1,\gamma}_t} \| \beta \|_{C^{1,\gamma}_t}.
\]
Therefore, using $\| \eta \|_{C^{1,\gamma}_t} < |t|^{1/3} |t|^\delta$ and multiplying through by $|t|^{1/3}$, we obtain
\[
|t|^{1/3} \bigg| \bigg( D \F|_\eta - D \F|_0 \bigg) \beta
\bigg|_{h_t}  \leq C  |t|^\delta \| \beta \|_{C^{1,\gamma}_t}.
\]
This estimate is of the desired form (\ref{linearized-diff}) for the contribution of term $({\rm I})$. The estimate of the H\"older norm
\[
  |t|^{1/3} |t|^{\gamma/3} \bigg| \bigg( D \F|_\eta - D \F|_0 \bigg) \beta \bigg|_{C^\gamma(h_t)} \leq C  |t|^\delta \| \beta \|_{C^{1,\gamma}_t}
\]
can be proved by a similar scaling argument.

\smallskip
\par $\bullet$ Term ${\rm II}$. We use the notation in (\ref{eq:linearized}). We must estimate
\be
\bigg| \bigg( D \F|_0-(d+d^\dagger) \bigg) \beta \bigg|_{C^{0,\gamma}_t} = |\mathcal{E}(\beta)|_{h_t} + |t|^{\gamma/3} [\mathcal{E}(\beta)]_{C^\gamma(h_t)}
\ee
where the error terms $\mathcal{E}$ are given in (\ref{eq:linearizederror}). Schematically, the error term is of the form
\bea
& \ & \mathcal{E}(\beta) \nonumber\\
&=&  \left( {\rm Re} \, e^{-i \hat{\theta}} - 1 \right)  f_t^* (J_t* {\rm Im} \, \Omega_{mod,t}) * \nabla_{h_t} \beta  +\left(  {\rm Im} \, e^{-i \hat{\theta}} \right)  f_t^*(J_t* {\rm Re} \, \Omega_{mod,t} )* \nabla_{h_t} \beta  \nonumber\\
&&+ \nabla_{h_t} \beta * f_t^*(g_t - g_{co,t}) + \beta * \nabla_{h_t} f_t^* (g_t - g_{co,t}) + \beta * f_t^*(J_t * \nabla_{g_{co,t}} g_t) \nonumber\\
&&+ \nabla_{h_t} \beta * {\rm Im} \,  f_t^* (e^{-i \hat{\theta}}\Omega_t - e^{-i \hat{\theta}}\Omega_{mod,t} ) + \beta * {\rm Im} \, \nabla_{h_t} f_t^* ( e^{-i \hat{\theta}} \Omega_t -  e^{-i \hat{\theta}} \Omega_{mod,t} ) \nonumber
\eea
where $*$ denotes contractions where indices may be raised or lowered using $h_t$ (or $g_{co,t}$ if before pullback by $f_t$).  Here we have used again that $\Omega_{mod,t}$ is parallel with respect to $g_{co,t}$.   Using (\ref{eq:cot2check}), (\ref{eq:Omegamod2Omega}), and (\ref{eq:zeta-est}), this can be estimated by
\[
|t|^{1/3} | \mathcal{E}(\beta) |_{h_t} \leq C |t|^{1/2} \| \beta \|_{C^{1,\gamma}_t}.
\]
The H\"older norms $|t|^{1/3}|t|^{\gamma/3} | \mathcal{E}(\beta) |_{C^\gamma(h_t)}$ can be estimated similarly, using that the H\"older norms on the background terms $f_t^*(\omega_t-\omega_{co,t})$, $f_t^*(\Omega_t-\Omega_{mod,t})$ can be converted to gradient norms using (\ref{eq:holder2grad}), and then estimates (\ref{eq:cot2check}), (\ref{eq:Omegamod2Omega}) can be used. This completes the proof of the estimate (\ref{linearized-diff}).
\end{proof}

\par We have thus proved the contraction mapping estimate (\ref{eq:contraction}) for $t$ small enough. We now check whether $\mathcal{U}_t$ is preserved by $\mathcal{N}$. Let $u \in \mathcal{U}_t$. Then
\bea
\| \mathcal{N}(u) \|_{C^{1,\gamma}_t} &\leq& \| \mathcal{N}( 0) \|_{C^{1,\gamma}_t} + \| \mathcal{N}(0) - \mathcal{N}(u) \|_{C^{1,\gamma}_t} \nonumber\\
&\leq& \| \mathcal{N}(0) \|_{C^{1,\gamma}_t} + {1 \over 2} \| u \|_{C^{1,\gamma}_t}
\eea
by (\ref{eq:contraction}). Using the estimate (\ref{invert-linear}) on the inverse of $d+d^\dagger$, we estimate
\[
\| \mathcal{N}(0) \|_{C^{1,\gamma}_t} \leq C |t|^{1/3} \| \F(0) \|_{C^{0,\gamma}_t}.
\]
By Lemma \ref{lem:approx-soln}, the approximate special 3-cycle $f:L \rightarrow X_t$ satisfies
\[
\| \F(0) \|_{C^{0,\gamma}_t} \leq \| f^* \omega_t \|_{C^{0,\gamma}_t} + \| f^* {\rm Im} \, e^{-i \hat{\theta}} \Omega_t \|_{C^{0,\gamma}_t} \leq C |t|^{1/2}. 
\]
Here the weighted $C^\gamma$ norms was converted to the weighted gradient norm (\ref{eq:holder2grad}). Altogether, we have that
\[
\| \mathcal{N}(u) \|_{C^{1,\gamma}_t} \leq C |t|^{1/3} |t|^{1/2} + {1 \over 2} |t|^{1/3} |t|^\delta <  |t|^{1/3} |t|^\delta
\]
for $t$ small and $0<\delta < (1/2)$. This shows that $\mathcal{N}$ preserves $\mathcal{U}_t$. Therefore $\mathcal{N} : \mathcal{U}_t \rightarrow \mathcal{U}_t$ is a contraction mapping. By the Banach fixed point theorem, $\mathcal{N}$ has a fixed point $\alpha \in \mathcal{U}_t$.
\smallskip
\par Our setup is such that the equation $\mathcal{N}(\alpha)=\alpha$ is equivalent to $\mathcal{F}(\alpha)=0$. We thus obtain a $C^{1,\gamma}$ solution of Theorem \ref{thm:3spheres}. The resulting submanifold $\tilde{L} \subset X_t$ is smooth by elliptic regularity theory for systems. This follows from Morrey's regularity \cite{Morrey} of minimal surfaces. Indeed, we noted in \S \ref{section:defn} that $\tilde{L}$ minimizes the area functional $A(L)=\int_L d {\rm vol}_{f^* \bar{g}}$ in a homology class $[\tilde{L}] \in H_3(X,\mathbb{R})$, where $\bar{g} = | \Omega |_{g_t}^{2/n} g_t$. Therefore $\tilde{L}$ is a minimal surface in $(X,\bar{g})$, and Morrey's regularity theorem states that $C^{1,\gamma}$ minimal surfaces are smooth. For a recent modification of Morrey's regularity theory to Hamiltonian stationary submanifolds in a symplectic manifold, see \cite{BhatChenWar}.
\smallskip
\par 
To conclude, we show that the non-K\"ahler special Lagrangian 3-sphere $\tilde{L}$ is rigid. Suppose that a solution $\alpha$ of the linearized special Lagrangian equation deforms to a family $\alpha_\epsilon$ of solutions $\F(\alpha_\epsilon)=0$ with ${d \over d \epsilon}\big|_{\epsilon=0} \alpha = \beta$. Then $D \F|_{\alpha} (\beta) = 0$. Since $H^1(S^3)=0$, then $\beta \in d\oplus d^\dagger$ and we may apply (\ref{invert-linear}):
\bea
\| \beta \|_{C^{1,\gamma}_t} &\leq& C |t|^{1/3} \| (d+d^\dagger_{h_t}) \beta \|_{C^{0,\gamma}_t} \nonumber\\
&\leq& C |t|^{1/3} \| D \F|_\alpha (\beta) \|_{C^{0,\gamma}_t} + C|t|^{1/3}\| (D\F|_\alpha - (d+d^\dagger_{h_t})) \beta \|_{C^{0,\gamma}_t} \nonumber\\
&\leq& C |t|^{1/3} \| D \F|_\alpha (\beta) \|_{C^{0,\gamma}_t} + C |t|^\delta \| \beta \|_{C^{1,\gamma}_t}
\eea
by (\ref{linearized-diff}). For $t$ small enough, we see that $D \F|_\alpha (\beta) = 0$ implies $\beta=0$. It follows that the special Lagrangian vanishing cycle on $X_t$ defined by $\alpha \in \mathcal{U}_t$ cannot be deformed inside $X_t$.

\section{Relation to $SU(3)$ structures and flux compactifications}\label{sec: phys}

The problem discussed in this paper has direct applications to string theory, in the context of flux compactifications and compactifications on a 6-manifold $X$ with $SU(3)$ structure. An $SU(3)$ structure $(\omega,\Psi)$ is given by a reduction of the structure group to $SU(3)$ with trivializing local co-tangent frames denoted $\{ e^i \}_{i=1}^6$, together with a 2-form $\omega$ and a 3-form $\Psi$ which appear in a local frame as
\begin{eqnarray}
  \omega &=& e^1 \wedge e^2 + e^3 \wedge e^4 + e^5 \wedge e^6, \notag \\
  \Psi &=& (e^1+ie^2) \wedge (e^3+ie^4) \wedge (e^5+ie^6). \notag
\end{eqnarray}
In general, on such a manifold $X$ the failure of $\omega$ and $\Psi$ to remain closed is parametrized by five torsion classes,
\begin{eqnarray}
d \omega & = & \frac{3}{2} \text{Im} (\bar W_1 \Psi) + W_4 \wedge \omega + W_3 \label{su3str} \\
d \Psi & = & W_1 \, \omega^2 + W_2 \wedge \omega + \bar W_5 \wedge \Psi \notag
\end{eqnarray}
which, in string compactification on $X$, are interpreted as NS-NS and R-R fluxes. In our setup of a complex manifold with trivial canonical bundle, we denoted by $\Omega$ the holomorphic volume form, and in this case the constant norm 3-form is $\Psi = \frac{\Omega}{|\Omega|}$. This means that a conformally balanced metric $\omega$ on $X$ has non-zero torsion classes $W_3$, $W_4$, and $W_5 = 2 W_4$, while $W_1 = 0 = W_2$. This pattern of torsion classes can be realized in both type II and the heterotic string theory, by using only the fields from the NS-NS sector. Namely, turning on the fluxes for the NS-NS 2-form field $B$ and for the dilaton, one can construct in this way 4d $\mathcal{N}=1$ supersymmetric vacua with zero cosmological constant, see {\it e.g.} \cite{AndersonKark,Grana,LLR}.

In the effective four-dimensional supergravity, these fluxes generate a superpotential $\mathcal{W} \sim \int \Omega \wedge (H + i d \omega)$ \cite{GVW,CCDL,dlOHS} that (partly) lifts moduli of the original system without fluxes (torsion). In type II string theory, these are Coulomb and Higgs branches, $\mathcal{M}_C$ and $\mathcal{M}_H$, parametrized by the fields in $\mathcal{N}=2$ vector and hypermultiplets. For example, in a 4d effective theory with $n_H$ hypermultiplets, the moduli space $\mathcal{M}_H$ is a quaternionic-K\"ahler space of real dimension $4 n_H$ and the Ricci scalar curvature $R = - 8 n_H (n_H + 2)$, whereas $\mathcal{M}_C$ is a special K\"ahler manifold. We have $(n_V,n_H) = (h^{1,1}, h^{2,1}+1)$ in type IIA string theory, and $(n_V,n_H) = (h^{2,1}, h^{1,1}+1)$ in type IIB string theory. Similarly, in the heterotic string without torsion, the moduli consist of the complex and K\"ahler structure deformations, as well as the moduli of the bundle $E \to X_t$. In the presence of fluxes (torsion), it was shown in the string theory literature \cite{AGS,dlOSvanes} that there is a map from the infinitesimal parameter space of the Strominger system to:
\be
H^{(0,1)} (T^* X_t) \; \oplus \; \text{ker} \big( H^1 (\mathcal{E}) \to H^{(0,2)} (T^* X_t) \big)
\ee
where $\mathcal{E}$ fits into the exact sequence $0 \to \text{End} (TX_t) \oplus \text{End} (E) \to \mathcal{E} \to TX_t \to 0$. In the mathematics literature, there is another approach to the moduli problem via the notion of a string algebroid and symmetries in generalized geometry; see \cite{GRT17,GRT,GRST}.

Now, using these tools, we are ready to describe the physical interpretation of the special Lagrangians in a topology changing transition to the conformally balanced non-Kahler metric on $X_t$. Since the topology changing transition \eqref{top-change} decreases $b_2$ and increases $b_3$, in type IIA string theory it corresponds to a phase transition in the effective 4d field theory from the Coulomb branch to the Higgs branch. This is the usual Higgs mechanism, in which charged particles condense breaking (part of) the gauge symmetry. The reverse process is the transition from Higgs to Coulomb branch, which is also the interpretation of \eqref{top-change} in type IIB string theory.

Following \cite{Stromingerholes,GMS}, we note that D2-branes in type IIA theory on $X$ are massive charged particles, charged under the gauge group $U(1)^{b_2 (X)}$ of the effective 4d theory. Due to the Friedman's condition \eqref{Friedmans-cond}, their charges are not independent. As a result, contracting $k$ disjoint $(-1,-1)$ curves $C_i \subset X$ which span a subspace of dimension $(k-1)$ in $H_2(X,\mathbb{R})$, has the effect of decreasing the number of vector multiplets by $k-1$ and increasing the number of hypermultiplets by $1$. Therefore, we expect $b_2 (X_t) = b_2 (X) - k + 1$ and $b_3 (X_t) = b_3 (X) + 2$.

The vanishing cycles $L_i$, $i = 1, \ldots, k$, have a more natural interpretation in type IIB string theory. Namely, D3-branes on $L_i$ can be interpreted as massive particles charged under $U(1)^{b_3 (X_t)/2}$ gauge group. Their condensation leads to the reverse of the transition \eqref{top-change}. However, these charged particles constructed from D3-branes in type IIB theory are not BPS. Indeed, in the context of the non-K\"ahler metric on $X_t$, the effective 4d theory has only $\mathcal{N}=1$ supersymmetry, which admits BPS strings but not BPS particles. The BPS strings can be produced by wrapping D3-branes on $(-1,-1)$ curves $C_i \subset X$. They are charged under 2-form tensor fields dual to compact periodic scalars in the hypermultiplets.

Finally, we remark that a transition \eqref{top-change} from a K\"ahler metric on $X$ to a non-K\"ahler metric on $X_t$ corresponds to a phase transition in the effective 4d supergravity theory from a branch with $\mathcal{N}=2$ supersymmetry to a branch with $\mathcal{N}=1$ supersymmetry. We propose the physical interpretation of such transitions in terms of soft SUSY-breaking terms that give a small mass to a chiral superfield $\Phi$ either inside a vector or inside a hypermultiplet. Then, on the branch where the complete $\mathcal{N}=2$ multiplet is massive, the small soft SUSY-breaking parameters are negligible and $\mathcal{N}=2$ supersymmetry is preserved, especially far out on this branch where loop and other quantum effects are suppressed. However, on the other branch --- where the $\mathcal{N}=2$ multiplet that contains $\Phi$ is massless --- soft SUSY-breaking terms produce the leading effect.

\end{document}